\documentclass[12pt,a4paper]{amsart}
\usepackage[bottom]{footmisc}
\usepackage{mathrsfs}
\usepackage{amsmath}
\usepackage{amsthm}
\usepackage{amsfonts}
\usepackage{latexsym}
\usepackage{graphicx}
\usepackage{hyperref}
\usepackage{amssymb}
\usepackage{epsf}
\usepackage{float}
\usepackage{fancyhdr}
\allowdisplaybreaks \makeatletter
\def\rightharpoonfill@{\arrowfill@\relbar\relbar\rightharpoonup}
\DeclareRobustCommand{\overrightharpoon}{\mathpalette{\underarrow@\rightharpoonfill@}}
\makeatother

\allowdisplaybreaks

 \topmargin 0cm
 \headheight 0cm
 \headsep 0.6cm
 \oddsidemargin 0cm
 \evensidemargin 0cm
 \marginparwidth 0cm
 \marginparsep 0cm
 \textwidth 15.5cm


\begin{document}
\newcommand{\beq}{\begin{equation}}
\newcommand{\eneq}{\end{equation}}
\newtheorem{thm}{Theorem}[section]
\newtheorem{cor}[thm]{Corollary}
\newtheorem{lem}[thm]{Lemma}
\newtheorem{pro}[thm]{Proposition}
\newtheorem{defn}[thm]{Definition}
\newtheorem{rem}[thm]{Remark}
\newtheorem{cl}[thm]{Claim}
\newtheorem{conjecture}{Conjecture}[section]
\newtheorem{corollary}{Corollary}[section]
\newtheorem{definition}{Definition}[section]
\newtheorem{proposition}{Proposition}[section]
\newtheorem{example}{Example}[section]
\newtheorem{lemma}{Lemma}[section]
\newtheorem{claim}{Claim}[section]
\newtheorem{problem}{Problem}[section]
\newtheorem{remark}{Remark}[section]
\newtheorem{theorem}{Theorem}[section]
\title{Well-posedness for the Cauchy problem of spatially weighted dissipative equation}
\author{Ziheng Tu$^{1}$\ \ \ \ Xiaojun Lu$^2$}
\pagestyle{fancy}                   
\lhead{Z. Tu and X. Lu}
\rhead{Spatially weighted dissipative equation} 
\thanks{Mathematics Subject Classification(2010): 35C10, 35Q55}
\thanks{Corresponding author: Xiaojun Lu, Department of Mathematics \& Jiangsu Key Laboratory of
Engineering Mechanics, Southeast University, 210096, Nanjing, China}
\thanks{Email addresses: tuziheng@zufe.edu.cn(Ziheng Tu), lvxiaojun1119@hotmail.de(Xiaojun Lu)}
\thanks{Keywords: Hankel transform, Dissipative equation, Cauchy problem, Well-posedness, Space-time estimates}
\date{}
\maketitle
\begin{center}
1. School of Mathematics and Statistics\\Zhejiang University of
Finance and Economics, 310018, Hangzhou, P.R.China\\
2. Department of Mathematics \& Jiangsu Key Laboratory of
Engineering Mechanics, Southeast University, 210096, Nanjing, China
\end{center}
\begin{abstract}
This paper mainly investigates the Cauchy problem of the spatially
weighted dissipative equation with initial data in the weighted
Lebesgue space. A generalized Hankel Transform is introduced to
derive the analytical solution and a special Young's Inequality has
been applied to prove the space-time estimates for this type of
equation.
\end{abstract}
\section{Introduction}
\indent We consider the Cauchy problem of the following spatially
weighted dissipative equation
\begin{equation}\label{1.1}\left\{\begin{array}{ll}
\partial_t u-|x|^{\beta} \Delta u=\pm|u|^bu\ \ (x,t)\in \mathbb{R}^n\times [0,\infty),\\
u(x,0)=u_0(x)\ \ \ \ \ x \in \mathbb{R}^n.
\end{array}\right.\end{equation}

Nowadays, the diffusion equations with variable coefficients have
wide application in physics, chemistry and engineering etc., and
attract more attention. As far as we know, there is few literature
on the well-posedness and space-time estimates of such type of
dissipative equation. In \cite{Miao01}, Miao studied the general
parabolic type equation where the diffusion operator
$A=\sum_{|\alpha|\leq 2m}a_{\alpha}(x)\partial^\alpha$ is strictly
parabolic type. A unified method based on the space-time estimates
has been introduced to demonstrate the well-posedness result. It is
worth noticing that $|x|^\beta\Delta$ does not absolutely satisfy
the strict parabolic condition. Compared with the standard Heat
equation, or the fractional dissipative equation(see
\cite{Miao08},\cite{Zhai09}), the spatial weight prevents us from
applying the partial Fourier transform. As a result, we need to
explore new approaches.

In this paper, we aim to solve this issue by introducing a special
Hankel transform. In accordance with the coefficient, we call it
$\beta$-Hankel transform. As we know, the standard Hankel transform
is a natural generalization of the Fourier transform of radial
functions. It is closely related with the Bessel operator and has
some nice properties such as $L^2$ isometry and self-adjointness,
etc. \cite{BR97},\cite{Haimo},\cite{Hirschman},\cite{MB95},\cite{PP97}.
Hankel transform has widely applications in the study of PDEs,
especially for the radial solutions of dispersive equations. For
instance, in \cite{FP03}\cite{FP01}  F. Planchon et al. applied the
Hankel transform to obtain the Strichartz estimates for the wave
equation with inverse square potential. In \cite{Tao}, Tao gave the
double end-point Strichartz estimate of the Schr\"{o}dinger
equation. In \cite{CHEN}, Chen studied the similar Schr\"{o}dinger
equation with inverse square potential. Recently, Miao \cite{Miao15}
obtained the maximal estimate to the the Schr\"{o}dinger equation
with inverse square potential. All the above work used the Hankel
transform to get the explicit solution formula.

Before we state the main result, we first introduce the definitions
of admissible and generalized triplets and give the functional space
we use.
\begin{defn}
The triplet $(m,p,q)$ is called an admissible triplet (for the
$k$-th model) if
 $$\frac1m=\frac{n-\beta+2k}{2-\beta}(\frac1q-\frac1p),$$
where
$$1<q\leq p<\left\{\begin{array}{ll}\frac{q(n-\beta+2k)}{n+2k-2}, \ \ \ \mbox{for}\ \ n>2-2k;\\
 \infty,\ \ \ \ \ \ \mbox{for}\ \ n\leq2-2k.\ \ \ \end{array}\right. $$
\end{defn}
\begin{defn}
The triplet $(m,p,q)$ is called a generalized admissible triplet
(for the $k$-th model) if
 $$\frac1m=\frac{n-\beta+2k}{2-\beta}(\frac1q-\frac1p),$$
where
$$1<q\leq p<\left\{\begin{array}{ll}\frac{q(n-\beta+2k)}{n+2k-2q+(q-1)\beta}, \ \ \ \mbox{for}\ \ n>2q+(1-q)\beta-2k;\\
 \infty,\ \ \ \ \ \ \mbox{for}\ \ n\leq2q+(1-q)\beta-2k.\ \ \ \end{array}\right. $$
\end{defn}
Here $k$ is a positive integer associated with the $k$-th model
which will be introduced in section 2.
\begin{rem}
(i) One can easily find that for the given $\beta$ and $k$, $m$ is unique determined by $p$ and $q$. Usually we write $m=m(p, q)$ .\\
(ii) It is easy to see that $q<m\leq\infty$ if $(m, p, q)$ is an admissible triplet. The condition $q<m$ is required
from the application of Marcinkiewicz interpolation theorem in Lemma 3.2.\\
(iii) It is easy to see that $1<m\leq\infty$ if $(m, p, q)$ is a
generalized admissible triplet.
\end{rem}

Now we define the $\mathcal{L}$-type space as
$$X(I):=C(I;L_{d\eta}^q(\mathbb{R}^+))\cap L^m(I;L_{d\eta}^p(\mathbb{R}^+)),$$ and $\mathcal{C}$-type space as
$$Y(I):=C_b(I;L^q_{d\eta}(\mathbb{R}^+))\cap
\mathcal{\dot{C}}_{m}(I;L_{d\eta}^p(\mathbb{R}^+)),$$ where
$I=[0,T)$ for $T>0$. And the weighted Lebesgue space
$L_{d\eta}^p(\mathbb{R}^+)$, time-weighted space-time Banach space
$\mathcal{C}_{\sigma}(I;L_{d\eta}^q(\mathbb{R}^+))$ and the
corresponding homogeneous space
$\mathcal{\dot{C}}_{\sigma}(I;L_{d\eta}^q(\mathbb{R}^+))$ are
defined as follows,
\begin{eqnarray*}
&&L_{d\eta}^p(\mathbb{R}^+):=\left\{f\in \mathcal{S}'(0,\infty)\left|\|f\|^p_{L_{d\eta}^p(\mathbb{R}^+)}=\int_0^\infty |{f}(r)|^p d\eta(r)<\infty\right.\right\};\\
&&\mathcal{C}_{\sigma}(I;L_{d\eta}^q(\mathbb{R}^+)):=\left\{f\in C(I;L_{d\eta}^q(\mathbb{R}^+))\left|\|f;\mathcal{C}_{\sigma}(I;L_{d\eta}^q(\mathbb{R}^+))\|=\sup_{t\in I}t^{\frac1\sigma}\|f\|_{L_{d\eta}^q(\mathbb{R}^+)}<\infty\right.\right\};\\
&&\mathcal{\dot{C}}_{\sigma}(I;L_{d\eta}^q(\mathbb{R}^+)):=\left\{f\in
\mathcal{C}_{\sigma}(I;L_{d\eta}^q(\mathbb{R}^+))\left|\lim_{t\rightarrow0^+}t^{\frac1\sigma}\|f\|_{L_{d\eta}^q(\mathbb{R}^+)}=0\right.\right\}.
\end{eqnarray*}
Define the norm
$$\|\cdot\|_{X(I)}:= \|\cdot\|_{L^\infty(I;L_{d\eta}^q(\mathbb{R}^+))}+\|\cdot\|_{L^m(I;L_{d\eta}^p(\mathbb{R}^+))}$$
and
$$\|\cdot\|_{Y(I)}:= \|\cdot\|_{L^\infty(I;L_{d\eta}^q(\mathbb{R}^+))}+\sup_{t\in I}t^{\frac1m}\|\cdot\|_{L_{d\eta}^p(\mathbb{R}^+)}.$$

At the moment, it is ready for us to introduce the main results. Consider the radial solution $u(t,r)$ of \eqref{1.1} satisfying
\begin{equation}\left\{\begin{array}{ll}
\partial_tu-r^\beta(\partial_{rr}u+\frac{n-1}r\partial_ru)=F(u(t,r)),\\
u(0,r)=u_0(r),
\end{array}\right.\label{radial}
\end{equation}
where the nonhomogeneous term $F(u)=\pm |u|^bu$. Let $k=0$,
$\gamma=\frac{n-\beta}{2-\beta}$ and $d\eta(r)=r^{n-1-\beta}dr$, we
have the following theorem on the existence of local solutions or
global small solutions.
\begin{thm}
 Let $1\leq q_0=\gamma b\leq q$ and $u_0\in L^q_{d\eta}(\mathbb{R}^+)$. Assume $(m,p,q)$ is an arbitrary admissible triplet with $k=0$. \\
(i) There exits $T>0$ and a unique solution $u\in X(I)$ to the problem \eqref{radial}, where $T=T(\|u_0\|_{L^q_{d\eta}})$ depends on $\|u_0\|_{L^q_{d\eta}}$ for $q>q_0$.\\
(ii) If $q=q_0$ then $T=\infty$ provided that
$\|u_0\|_{L^q_{d\eta}}$ is sufficiently small. In other words, there
exits a global small solution
$u\in C_b([0,\infty);L_{d\eta}^q(\mathbb{R}^+))\cap L^m([0,\infty);L_{d\eta}^p(\mathbb{R}^+))$.\\
(iii) Let $I=[0,T^*)$ be the maximal existence interval of the
solution $u$ to the problem \eqref{radial} such that $u\in
C_b([0,T^*);L_{d\eta}^q(\mathbb{R}^+))\cap
L^m([0,T^*);L_{d\eta}^p(\mathbb{R}^+))$ for $q>q_0$. Then,
$$\|u(t)\|_{L^q_{d\eta}}\geq\frac{C}{(T^*-t)^{\frac1b-\frac{\gamma}{q}}}.$$
\end{thm}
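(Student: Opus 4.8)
The plan is to recast \eqref{radial} as a Duhamel integral equation and solve it by a contraction mapping argument built on the linear space-time estimates, and then to read off the blow-up rate in part (iii) from the scaling of the local existence time together with a continuation argument. Denote by $S(t)$ the linear solution operator of $\partial_t u = r^\beta(\partial_{rr}u+\frac{n-1}{r}\partial_r u)$ obtained explicitly through the $\beta$-Hankel transform of Section 2, and write the mild formulation
\[
u(t) = S(t)u_0 + \int_0^t S(t-s)F(u(s))\,ds =: \Phi(u)(t).
\]
I would invoke the homogeneous and retarded space-time estimates proved in Lemma 3.1 and Lemma 3.2, which for an admissible triplet $(m,p,q)$ with $k=0$ give $\|S(t)u_0\|_{X(I)}\lesssim\|u_0\|_{L^q_{d\eta}}$ together with the decay bound $\|S(t)f\|_{L^p_{d\eta}}\lesssim t^{-1/m}\|f\|_{L^q_{d\eta}}$, where $\frac1m=\gamma(\frac1q-\frac1p)$.

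The key nonlinear step is to estimate $F(u)=\pm|u|^{b}u$ in the weighted spaces by H\"older's inequality: since $\|F(u)\|_{L^{a}_{d\eta}}=\|u\|_{L^{a(b+1)}_{d\eta}}^{b+1}$, I would choose the auxiliary exponent $a$ so that $a(b+1)$ matches $p$ (respectively $q$) and so that the admissibility relation together with $q_0=\gamma b$ renders the exponents consistent. Inserting the decay estimate into the Duhamel term and integrating in time yields a factor $T^{\theta}$ with $\theta$ proportional to $\frac1b-\frac{\gamma}{q}=\gamma(\frac1{q_0}-\frac1q)$, hence $\theta>0$ precisely when $q>q_0$; this gained power of $T$ is what drives the contraction. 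For part (i) I would then show that for $T$ small enough $\Phi$ maps a ball of $X(I)$ into itself and is a contraction there, so that Banach's fixed point theorem supplies the unique local solution. In the critical case $q=q_0$ of part (ii) one has $\theta=0$, so no smallness can be extracted from $T$; instead I would run the same contraction globally on $[0,\infty)$ in the time-weighted space $Y([0,\infty))$, whose scaling-invariant norm allows the nonlinear term to be absorbed using the smallness of $\|u_0\|_{L^q_{d\eta}}$, yielding the global small solution.

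For part (iii) the plan is first to make the dependence $T=T(\|u_0\|_{L^q_{d\eta}})$ quantitative. The scaling
\[
u_\lambda(t,r) = \lambda^{(2-\beta)/b}\,u(\lambda^{2-\beta}t,\lambda r),
\]
which preserves \eqref{radial}, transforms the datum according to
\[
\|u_\lambda(0)\|_{L^q_{d\eta}} = \lambda^{(2-\beta)\left(\frac1b-\frac{\gamma}{q}\right)}\|u_0\|_{L^q_{d\eta}},
\]
while the existence interval scales by $\lambda^{-(2-\beta)}$; matching the two forces the local time to obey $T(\|u_0\|_{L^q_{d\eta}})\gtrsim\|u_0\|_{L^q_{d\eta}}^{-1/(\frac1b-\frac{\gamma}{q})}$, a bound which can equally be read off directly from the contraction estimate. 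Then, for the maximal solution on $[0,T^*)$, I would restart the equation at an arbitrary $t\in[0,T^*)$ with datum $u(t)$: the local theory of part (i) produces a solution on $[t,t+T(\|u(t)\|_{L^q_{d\eta}}))$, and maximality forces $T(\|u(t)\|_{L^q_{d\eta}})\le T^*-t$. Combining this with the lower bound on $T(\cdot)$ and rearranging gives $\|u(t)\|_{L^q_{d\eta}}\gtrsim (T^*-t)^{-(\frac1b-\frac{\gamma}{q})}$, which is exactly the asserted rate.

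The main obstacle I expect is the nonlinear estimate in the weighted setting: one must confirm that there exists an admissible (and, in the critical case, time-weighted) exponent pair for which H\"older's inequality in $L^{\cdot}_{d\eta}$ closes and for which the retarded estimate is available, and in the critical case one must check that the scaling-invariant $Y$-norm genuinely dominates the nonlinearity with no help from $T$. A secondary technical point concerns the continuation argument of part (iii): one has to ensure that the local existence time depends only on $\|u(t)\|_{L^q_{d\eta}}$ uniformly along the flow, so that the restarting procedure is legitimate and the lower blow-up bound follows.
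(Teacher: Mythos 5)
Your treatment of parts (i) and (iii) is essentially the paper's own proof: the paper sets up the same Duhamel map $\mathcal{T}u=S_{\mu}(t)u_0+\mathbb{G}(F(u))$, closes the contraction on the ball $\|u\|_{X(I)}\leq 2C_1\|u_0\|_{L^q_{d\eta}}$ using Lemma 3.2 for the homogeneous part and Lemma 3.4 for the nonlinearity (which supplies exactly your factor $T^{1-\frac{b\gamma}{q}}=T^{b(\frac1b-\frac{\gamma}{q})}$), and for (iii) restarts at time $t$ and reads the lower bound on the local existence time off the saturation of the smallness condition $(2C_1)^bC_2|s-t|^{1-\frac{b\gamma}{q}}\|u(t)\|^b_{L^q_{d\eta}}\leq\frac12$ --- precisely your ``read off directly from the contraction estimate'' alternative; your scaling computation $u_\lambda(t,r)=\lambda^{(2-\beta)/b}u(\lambda^{2-\beta}t,\lambda r)$ is correct and is a nice cross-check, but the paper does not need it.

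The one genuine gap is in part (ii). You assert that at $q=q_0$ ``no smallness can be extracted from $T$; instead I would run the same contraction globally on $[0,\infty)$ in the time-weighted space $Y([0,\infty))$.'' But Theorem 1.4(ii) asserts a global solution in $C_b([0,\infty);L^q_{d\eta})\cap L^m([0,\infty);L^p_{d\eta})$, i.e.\ in the $X$-space, and a fixed point in $Y([0,\infty))$ only gives $\sup_{t}t^{\frac1m}\|u(t)\|_{L^p_{d\eta}}<\infty$, which does \emph{not} imply $u\in L^m([0,\infty);L^p_{d\eta})$ (the borderline profile $t^{-1/m}$ is exactly not in $L^m$); your route proves the conclusion of Theorem 1.5(ii) --- the $\mathcal{C}$-space counterpart under generalized admissible triplets --- not the statement at hand. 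The correct observation, and what the paper does, is that no change of space is needed: at the critical exponent $q=q_0=\gamma b$ the factor $T^{1-\frac{b\gamma}{q}}$ in Lemma 3.4 is identically $1$, so the estimate $\|\mathcal{T}u\|_{X(I)}\leq C_1\|u_0\|_{L^q_{d\eta}}+C_2\|u\|^{b+1}_{X(I)}$ holds with constants uniform in $T$, including $T=\infty$; smallness of $\|u_0\|_{L^q_{d\eta}}$ alone (namely $(2C_1)^bC_2\|u_0\|^b_{L^q_{d\eta}}\leq\frac12$) then closes the contraction directly in $X([0,\infty))$, and since Lemma 3.2(i) is stated for $0<T\leq\infty$, the homogeneous part already lies in $L^m([0,\infty);L^p_{d\eta})$. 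If you insist on the $Y$-space route you must add a separate argument upgrading the time-weighted bound to genuine $L^m_t$ integrability, which is not automatic.
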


In a similar manner, we can also prove the following well-posedness
results under the $\mathcal{C}$-space theory.
\begin{thm}
Let $\gamma=\frac{n-\beta}{2-\beta}$, $1\leq q_0=\gamma b\leq q$ and $u_0\in L^q_{d\eta}(\mathbb{R}^+)$. Assume $(m,p,q)$ is an arbitrary generalized admissible triplet with $k=0$. \\
(i) There exits $T>0$ and a unique mild solution $u\in Y(I)$ to the problem \eqref{radial}, where $T=T(\|u_0\|_{L^q_{d\eta}})$ depends on $\|u_0\|_{L^q_{d\eta}}$ for $q>q_0$.\\
(ii) If $q=q_0$ then $T=\infty$ provided that
$\|u_0\|_{L^q_{d\eta}}$ is sufficiently small. In other words, there
exits a global small solution
$u\in C_b([0,\infty);L_{d\eta}^q(\mathbb{R}^+))\cap \mathcal{\dot{C}}_{m}([0,\infty);L_{d\eta}^p(\mathbb{R}^+))$.\\
(iii) Let $I=[0,T^*)$ be the maximal existence interval of the
solution $u$ to the problem \eqref{radial} such that $u\in
C_b([0,T^*);L_{d\eta}^q(\mathbb{R}^+))\cap
\mathcal{\dot{C}}_{m}([0,T^*);L_{d\eta}^p(\mathbb{R}^+))$ for
$q>q_0$. Then:
$$\|u(t)\|_{L^q_{d\eta}}\geq\frac{C}{(T^*-t)^{\frac1b-\frac{\gamma}{q}}}.$$
\end{thm}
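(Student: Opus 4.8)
The plan is to recast \eqref{radial} as the Duhamel integral equation
$$u(t)=S(t)u_0+\mathcal{N}(u)(t),\qquad \mathcal{N}(u)(t):=\int_0^t S(t-s)F(u(s))\,ds,$$
where $S(t)$ is the linear $\beta$-Hankel dissipative semigroup from Section 2, and to solve it by the Banach contraction principle in $Y(I)$. The argument runs parallel to Theorem 1.1, the sole structural change being that the $L^m$-in-time norm is replaced by the weighted sup norm $\sup_{t\in I}t^{1/m}\|\cdot\|_{L^p_{d\eta}}$. Because this norm sidesteps the Marcinkiewicz interpolation that forced $q<m$ in the admissible setting, one may work on the wider range of a \emph{generalized} admissible triplet. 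The two linear inputs are the $L^q_{d\eta}$-boundedness $\|S(t)u_0\|_{L^q_{d\eta}}\le C\|u_0\|_{L^q_{d\eta}}$ and the smoothing bound $\|S(t)u_0\|_{L^p_{d\eta}}\le Ct^{-1/m}\|u_0\|_{L^q_{d\eta}}$ with $1/m=\gamma(1/q-1/p)$ at $k=0$; together they place $S(\cdot)u_0$ in $Y(I)$ with norm $\lesssim\|u_0\|_{L^q_{d\eta}}$, the vanishing condition of $\mathcal{\dot{C}}_m$ following by approximating $u_0$ in $L^q_{d\eta}$ by smoother data.

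For the nonlinear term I would use $F(u)=\pm|u|^bu$, Hölder on $L^{p/(b+1)}_{d\eta}$, and the smoothing estimate to obtain
$$\|S(t-s)F(u(s))\|_{L^p_{d\eta}}\le C(t-s)^{-\gamma b/p}s^{-(b+1)/m}\|u\|_{Y(I)}^{b+1},$$
together with the analogous bound carrying $L^q_{d\eta}$ on the left. Integrating in $s$ via the special Young (Beta-function) inequality of the earlier sections evaluates the time integral as a multiple of a power of $t$; a short computation using $1/m=\gamma(1/q-1/p)$ and $q_0=\gamma b$ shows that, after multiplication by $t^{1/m}$, the surviving exponent is exactly $1-q_0/q$. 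Hence $\mathcal{N}:Y(I)\to Y(I)$, and the same manipulation applied to $F(u)-F(v)$ makes it Lipschitz with constant $O(T^{\,1-q_0/q}R^b)$ on the ball of radius $R$.

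Parts (i) and (ii) then follow from the sign of $1-q_0/q$. When $q>q_0$ this exponent is positive, so taking $R\sim\|u_0\|_{L^q_{d\eta}}$ and $T$ small makes $\Phi(u)=S(\cdot)u_0+\mathcal{N}(u)$ a contraction, giving a unique local solution with $T=T(\|u_0\|_{L^q_{d\eta}})$. In the critical case $q=q_0$ the exponent vanishes, so the contraction constant depends only on $R$ and not on $T$; choosing $\|u_0\|_{L^q_{d\eta}}$ small closes the fixed point on $I=[0,\infty)$ and yields the global small solution. For the blow-up rate (iii) I would combine the local theory with the scaling $u\mapsto\lambda^{(2-\beta)/b}u(\lambda\cdot,\lambda^{2-\beta}\cdot)$, under which \eqref{radial} is invariant and $\|u_0\|_{L^q_{d\eta}}$ carries the weight $\lambda^{(2-\beta)/b-(n-\beta)/q}$; this forces the existence time to satisfy $T(\|u_0\|_{L^q_{d\eta}})\ge c\,\|u_0\|_{L^q_{d\eta}}^{-1/(1/b-\gamma/q)}$. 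Restarting from time $t$ with data $u(t)$ and demanding consistency with maximality, $c\,\|u(t)\|_{L^q_{d\eta}}^{-1/(1/b-\gamma/q)}\le T^*-t$, which rearranges to the claimed lower bound $\|u(t)\|_{L^q_{d\eta}}\ge C(T^*-t)^{-(1/b-\gamma/q)}$.

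The main obstacle I anticipate is the nonlinear estimate in the time-weighted norm: one must verify that both exponents in the Beta integral $\int_0^t(t-s)^{-\gamma b/p}s^{-(b+1)/m}\,ds$ are subcritical so it converges, and this is precisely where the generalized (rather than ordinary) admissible range is used. In the critical case $q=q_0$ the integrability at the lower endpoint and the closing of the fixed point hinge on the vanishing property $\lim_{t\to0^+}t^{1/m}\|u\|_{L^p_{d\eta}}=0$ built into $\mathcal{\dot{C}}_m$; checking that $\mathcal{N}$ preserves this vanishing, so that the solution truly lands in $Y(I)$, is the delicate point. Everything else is routine once the linear smoothing estimates of Section 3 are in hand.
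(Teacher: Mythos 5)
Your strategy coincides with the paper's own route to this theorem: the paper proves Theorem 1.4 in Section 4 by a Banach contraction for the Duhamel map, and obtains the present theorem ``in a similar manner'' by transposing that contraction to $Y(I)$, with the linear input being Lemma 3.2(ii) (which, as you correctly observe, follows directly from the Young-inequality smoothing bound of Lemma 3.1 with no Marcinkiewicz step, and this is exactly why the wider generalized admissible range is available) and the nonlinear input being Lemma 3.5. Your part (iii), via the scaling weight $\lambda^{(2-\beta)/b-(n-\beta)/q}$, is equivalent to the paper's direct argument, which reads the lower bound on the local existence time off the contraction condition $(2C_1)^bC_2T^{1-\frac{b\gamma}q}\|u_0\|_{L^q_{d\eta}}^b\le\frac12$ and restarts from time $t$; both give $\|u(t)\|_{L^q_{d\eta}}\ge C(T^*-t)^{-(\frac1b-\frac\gamma q)}$.

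The genuine gap is in your nonlinear estimate. You control $F(u)=\pm|u|^bu$ solely by H\"older in $L^{p/(b+1)}_{d\eta}$, arriving at the integral $\int_0^t(t-s)^{-\gamma b/p}s^{-(b+1)/m}\,ds$, and you assert that generalized admissibility is ``precisely'' what makes both exponents subcritical. It is not: since $\frac{b+1}m=(b+1)\gamma\left(\frac1q-\frac1p\right)$, the bound $\frac{b+1}m\le\frac{b\gamma}q\le1$ holds only when $p\le q(b+1)$. For $p>q(b+1)$ the singularity at $s=0$ can be non-integrable — at $q=q_0$ one has $\frac{b+1}m\ge1$ exactly when $p\ge(b+1)q_0$ — and such triplets genuinely lie in the generalized admissible range: at $q=q_0$ the upper bound on $p$ in Definition 1.2 equals $q/(1-b)$ when $b<1$ (and is $\infty$ when $b\ge1$), and $q(b+1)<q/(1-b)$ since $(b+1)(1-b)=1-b^2<1$ for every $b>0$. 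So on part of the asserted range your Beta integral diverges and $\mathcal{N}$ is not bounded on $Y(I)$ by your argument. This is precisely why the paper's Lemmas 3.4 and 3.5 split into the two cases $p\le q(b+1)$ and $p>q(b+1)$: in the latter one first interpolates $\|u\|_{L^{q(b+1)}_{d\eta}}\le\|u\|^{\theta}_{L^q_{d\eta}}\|u\|^{1-\theta}_{L^p_{d\eta}}$ with $\theta=\frac{p-q(b+1)}{(b+1)(p-q)}$, softening the time weight to $s^{-(b+1)(1-\theta)/m}$ (integrable) while the total power of $T$ still comes out as $1-\frac{b\gamma}q$; the same splitting is needed for the $L^\infty(I;L^q_{d\eta})$ component and for the difference estimate on $F(u)-F(v)$. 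Your proposal as written therefore proves the theorem only on the subrange $b+1<p\le q(b+1)$; adding the interpolation case (and, at the borderline $q=q_0$, $p=q(b+1)$ where $\frac{b+1}m=1$, invoking the vanishing property of $\dot{\mathcal{C}}_m$ that you rightly flagged as delicate) completes it along the paper's lines.
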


\begin{rem}
Our method can be further applied to the diffusion operator with
inverse square potential such as
$$A=|x|^{\beta}(\Delta+\frac{a}{|x|^2}).$$ As the technique reason,
the coefficient $\beta$ is restricted on $[0,2)$ in this paper. It
is worth noticing the case $\beta=0$ is reduced to the standard Heat
equation.
\end{rem}
The paper is organized as follows. In Section 2, we show some
preliminary work. The theory of Hankel transform and the property of
Bessel function has been revisited. Then, we introduce
$\beta$-Hankel transform and its inverse transform. With these
definitions, a weighted $L^2$ isometry is investigated which is
similar to the standard Hankel transform.  Moreover, in order to
obtain the space-time estimates, an associated convolution operator
and the Young's inequality are introduced. In Section 3, the
semigroup $S_{\mu}(t)=e^{t(-|x|^\beta\Delta)}$ is defined to derive
explicit solution formula of the $k$-th model. Consequently, a
detailed analysis of the kernal function is given followed by the
space-time estimates of the admissible triplets. Section 4 is
devoted to the radial solution of the nonlinear case $F(u)=|u|^bu$.
The well-posedness results of local solution and the small global
solution are given by the contraction mapping technique.
\section{The linear $k$-th model and its integral solution}
In this paper, we always denote:
$$\lambda=\lambda(n)=\frac{n-2}2,\ \mu(k)=\frac{n-2}2+k,\ \mbox{and}\ \mu(\beta,k)=\frac{2\mu(k)}{2-\beta}.$$
Here $n\geq2$ stands for the dimension of Euclidean space and $k$
stands for the degree of  spherical harmonic subspace. For
simplicity, we denote $\mu=\mu(\beta,k)$. We start this section from
recalling the the spherical harmonics expansion. Let $$x=r\theta\ \
\ \mbox{and}\ \ \ \xi=\rho\omega \ \ \ \mbox{with}\ \ \
\theta,\omega\in\mathbb{S}^{n-1}.$$ For any $g\in
L^2(\mathbb{R}^n)$, we have
$$g(x)=g(r\theta)=\sum^\infty_{k=0}\sum^{d(k)}_{l=1}a_{k,l}(r)Y_{k,l}(\theta),$$
where $$ \{Y_{k,1},\cdots Y_{k,d(k)}\}$$ is the orthogonal basis of
the space of spherical harmonics of degree $k$ on
$\mathbb{S}^{n-1}$, called $\mathcal{H}^k$, having dimension
$$d(k)=\frac{2k+n-2}{k}C^{k-1}_{n+k-3}\backsimeq<k>^{n-2}.$$
We remark that for n=2, the dimension of $\mathcal{H}^k$ is
independent of $k$. Obviously, we have the orthogonal decomposition
$$L^2(\mathbb{S}^{n-1})=\bigoplus_{k=0}^\infty\mathcal{H}^k.$$
By orthogonality, it gives
$$\|g(x)\|_{L^2_\theta(\mathbb{S}^{n-1})}=\|a_{k,l}(r)\|_{l_{k,l}^2}=\left(\sum_{k=0}^{\infty}\sum_{l=1}^{d(k)}a_{k,l}^2(r)\right)^\frac12.$$

Now we consider the following semilinear spacially weighted
dissipative equation in polar coordinates,
\begin{equation}\label{*}\left\{\begin{array}{ll}
\partial_t u-|x|^{\beta} \Delta u=f(x,t)\ \ (x,t)\in \mathbb{R}^n\times [0,\infty),\\
u(x,0)=u_0(x)\ \ \ \ \ x \in \mathbb{R}^n.
\end{array}\right.\end{equation}
Let $V(t,r,\theta)=u(t,r\theta)$, the initial data
$V_0(r,\theta)=u_0(r\theta)$ and the inhomogeneous term
$F(t,r,\theta)=f(t,r\theta)$. Then $V(t,r,\theta)$ satisfies:
\begin{equation*}\left\{\begin{array}{ll}
\partial_tV-r^\beta(\partial_{rr}V+\frac{n-1}r\partial_rV+\frac1{r^2}\Delta_\theta V)=F(t,r\theta),\\
V(0,r,\theta)=V_0(r\theta).
\end{array}\right.
\end{equation*}
Furthermore, let the initial data $V_0$ and inhomogeneous term $F$
as superposition of spherical harmonic functions, i.e.,
  $$V_0(r\theta)=\sum^\infty_{k=0}\sum^{d(k)}_{l=1}a_{k,l}(r)Y_{k,l}(\theta)\ \ \mbox{and}\ \  F(t,r\theta)=\sum^\infty_{k=0}\sum^{d(k)}_{l=1}b_{k,l}(t,r)Y_{k,l}(\theta).$$
Using the separation of variables, we can write $V(t,r,\theta)$ as a
linear combination of products of radial functions and spherical
harmonics,
$$V(t,r,\theta)=\sum_{k=0}^{\infty}\sum_{l=1}^{d(k)}v_{k,l}(t,r)Y_{k,l}(\theta),$$
where $v_{k,l}$ is given by
\begin{equation*}\left\{\begin{array}{ll}
\partial_tv_{k,l}-r^\beta(\partial_{rr}v_{k,l}+\frac{n-1}r\partial_rv_{k,l}-\frac{k(k+n-2)}{r^2}v_{k,l})=b_{k,l}(t,r),\\
v_{k,l}(0,r)=a_{k,l}(r),
\end{array}\right.
\end{equation*}
for each $k,l\in \mathbb{N}$ and $1\leq l\leq d(k)$. If we denote
the operator
$$A_{\mu(k)}:=-\partial^2_r-\frac{n-1}{r}\partial_r+\frac{\mu^2(k)-\lambda^2(n)}{r^{2}},$$
then, we can rewrite the above equation by the definition of
$A_{\mu(k)}$ as
\begin{equation}\label{kth}\left\{\begin{array}{ll}\partial_t v_{k,l}+r^\beta A_{\mu(k)}v_{k,l}=b_{k,l}(t,r),\\
v_{k,l}(0,r)=a_{k,l}(r).
\end{array}\right.\end{equation}
We call equation \eqref{kth} the $k$-th model. In the rest of this
section, we skip $k$ and $l$ in the notation for convenience's sake
by remembering $\mu=\mu(\beta,k)=\frac{2\mu(k)}{2-\beta}$ and
$\mu(k)=k+\lambda$.


Next, we introduce the generalized $\beta$-Hankel transform and give
the mild solution of the $k$-th model \eqref{kth}.
\begin{defn}
Let $\beta\in[0,2)$, $\phi(r)$ and $\psi(r)$ be integrable functions
in $\mathbb{R}^+$, we define the generalized $\beta$-Hankel
transform of $\phi(r)$ as follows,
$$\mathcal{H}_{\mu }\phi(\rho):=\int_0^{\infty}U(r\rho)\phi(r)r^{n-1}dr,\ U(w)=w^{\frac{2-n-2\beta}{2}}J_{\mu}(\frac2{2-\beta}w^{\frac{2-\beta}{2}})$$
and its inversion on $\psi(\rho)$,
$$\mathcal{H}^{-1}_{\mu }\psi(r):=\int_0^{\infty}V(r\rho)\psi(\rho)\rho^{n-1}d\rho,\ V(w)=w^{\frac{2-n}{2}}J_{\mu }(\frac2{2-\beta}w^{\frac{2-\beta}{2}}),$$
where $J_\mu(x)$ is the first kind of Bessel function of real order
$\mu=\mu(\beta,k)>-\frac12$ defined as
$$J_{\mu}(r):=\frac{(r/2)^\mu}{\Gamma(\mu+1/2)\pi^{1/2}}\int_{-1}^1e^{irt}(1-t^2)^{\mu-1/2}dt.$$
\end{defn}

Before going further, we need to prove $\mathcal{H}^{-1}_{\mu}$ in
Definition 2.1 is the true inverse of $\mathcal{H}_{\mu}$. Given
$\phi\in L(\mathbb{R}^+)$, we have
\begin{eqnarray*}
\mathcal{H}^{-1}_{\mu }\mathcal{H}_{\mu }\phi(r)&=&\int_0^{\infty}V(r\rho)\int_0^{\infty}U(s\rho)\phi(s)s^{n-1}ds\rho^{n-1}d\rho\\
&=&\int_0^\infty\phi(s)s^{n-1}ds\int_0^\infty
V(r\rho)U(s\rho)\rho^{n-1}d\rho.
\end{eqnarray*}
After a proper scaling calculation, we find
\begin{eqnarray*}
&&\int_0^\infty V(r\rho)U(s\rho)\rho^{n-1}d\rho\\
&&=\frac2{2-\beta}r^{\frac{2-n}{2}}s^{\frac{2-n-2\beta}2}\int_0^\infty
J_\mu(\frac2{2-\beta}r^{\frac{2-\beta}2}\rho^{\frac{2-\beta}2})
J_\mu(\frac2{2-\beta}s^{\frac{2-\beta}2}\rho^{\frac{2-\beta}2})
\rho^{\frac{2-\beta}2}d\rho^{\frac{2-\beta}2}\\
&&=r^{\frac{\beta-n}{2}}s^{\frac{2-n-2\beta}2}\delta(\frac2{2-\beta}r^{\frac{2-\beta}2}-\frac2{2-\beta}s^{\frac{2-\beta}2}),
\end{eqnarray*}
where $\delta$ is the delta function. Thus,
\begin{eqnarray*}
\mathcal{H}^{-1}_{\mu }\mathcal{H}_{\mu }\phi(r)&=&\int_0^\infty\phi(s)r^{\frac{\beta-n}{2}}s^{\frac{n-\beta}2}\delta(\frac2{2-\beta}r^{\frac{2-\beta}2}-\frac2{2-\beta}s^{\frac{2-\beta}2})d(\frac2{2-\beta}s^{\frac{2-\beta}2})\\
&=&\phi(r).
\end{eqnarray*}
As a result, $\beta$-Hankel transform and its inverse are
well-defined. We have the following properties for the
$\beta$-Hankel transform:
\begin{pro}
Let $\mathcal{H}_{\mu(\beta,k)}$ and $A_{\mu(k)}$ be defined as
above, then,
\begin{flalign}
&\displaystyle (i)\ \mathcal{H}_{\mu}\ \mbox{and}\ \mathcal{H}_{\mu}^{-1}\ \mbox{are self-adjoint, i.e.,}\ \mathcal{H}_{\mu}=\mathcal{H}_{\mu}^*\ \mbox{and}\ \mathcal{H}_{\mu}^{-1}=\mathcal{H}_{\mu}^{-1*}\nonumber\\
&\displaystyle (ii)\ \int_0^\infty\mathcal{H}_{\mu}^2\phi(\rho)\rho^{\beta+n-1}d\rho=\int_0^\infty \phi^2(r)r^{-\beta+n-1}dr.&\nonumber\\
&\displaystyle (iii)\ \mathcal{H}_{\mu(\beta,k)}(r^\beta
A_{\mu(k)}\phi)(\rho)=\rho^{2-\beta}\mathcal{H}_{\mu(\beta,k)}(\phi)(\rho).&
\nonumber
\end{flalign}
\end{pro}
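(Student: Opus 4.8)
The plan is to treat the three identities separately, since each rests on a different structural feature of the kernel $U$ and of the operator $A_{\mu(k)}$. Throughout I work with the pairing $\langle f,g\rangle=\int_0^\infty f(r)g(r)\,r^{n-1}dr$ induced by the weight $r^{n-1}dr$ that already appears in the definition of $\mathcal{H}_\mu$. For (i) I would simply unfold the definitions and apply Fubini's theorem: writing $\langle\mathcal{H}_\mu\phi,\psi\rangle$ as the double integral $\int_0^\infty\int_0^\infty U(r\rho)\phi(r)\psi(\rho)\,r^{n-1}\rho^{n-1}\,dr\,d\rho$, the decisive observation is that $U(r\rho)$ depends only on the product $r\rho$ and is therefore symmetric under the interchange of $r$ and $\rho$; hence the expression equals $\langle\phi,\mathcal{H}_\mu\psi\rangle$, i.e. $\mathcal{H}_\mu=\mathcal{H}_\mu^*$. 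The same argument with the kernel $V(r\rho)$ gives $\mathcal{H}_\mu^{-1}=\mathcal{H}_\mu^{-1*}$. This part is routine; only the justification of Fubini for $\phi,\psi\in L(\mathbb{R}^+)$ needs a word.

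For (ii) I would repeat the scaling-plus-closure argument already used to verify $\mathcal{H}_\mu^{-1}\mathcal{H}_\mu=\mathrm{Id}$. Expanding $(\mathcal{H}_\mu\phi)^2$ as a double integral in variables $r,s$ and integrating in $\rho$ first, the substitution $\sigma=\rho^{(2-\beta)/2}$ brings the $\rho$-integral into the form $\int_0^\infty\sigma J_\mu(a\sigma)J_\mu(b\sigma)\,d\sigma$ with $a=\tfrac{2}{2-\beta}r^{(2-\beta)/2}$ and $b=\tfrac{2}{2-\beta}s^{(2-\beta)/2}$. The Bessel closure relation $\int_0^\infty\sigma J_\mu(a\sigma)J_\mu(b\sigma)\,d\sigma=a^{-1}\delta(a-b)$ then collapses this to a delta supported on $s=r$; after accounting for the Jacobian, the weight $\rho^{\beta+n-1}$ on the left is converted into precisely the weight $r^{n-1-\beta}$ on the right, yielding $\int_0^\infty\phi^2(r)r^{n-1-\beta}\,dr$. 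The only delicate point is tracking the exponents of $r$ accurately through the change of variable.

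The heart of the proposition is (iii), and this is where the real work lies. The strategy is to move $r^\beta A_{\mu(k)}$ off $\phi$ and onto the kernel. First I would record that $A_{\mu(k)}=-\partial_r^2-\frac{n-1}{r}\partial_r+\frac{\mu^2(k)-\lambda^2(n)}{r^2}$ is formally self-adjoint with respect to $r^{n-1}dr$, so that $\int_0^\infty U(r\rho)\,r^\beta(A_{\mu(k)}\phi)\,r^{n-1}dr=\int_0^\infty\big(A_{\mu(k)}[r^\beta U(r\rho)]\big)\phi\,r^{n-1}dr$ once the boundary contributions at $r=0$ and $r=\infty$ are shown to vanish. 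It then remains to establish the kernel eigen-relation
\[
A_{\mu(k)}\big[r^\beta U(r\rho)\big]=\rho^{2-\beta}\,U(r\rho).
\]
To prove this I would write $r^\beta U(r\rho)=\rho^{(2-n-2\beta)/2}\,r^{(2-n)/2}J_\mu\big(c\,r^{(2-\beta)/2}\big)$ with $c=\tfrac{2}{2-\beta}\rho^{(2-\beta)/2}$, substitute $z=c\,r^{(2-\beta)/2}$, and use the Bessel equation $J_\mu''+\frac1z J_\mu'+(1-\frac{\mu^2}{z^2})J_\mu=0$ to eliminate $J_\mu''$. Setting $\alpha=\frac{2-n}{2}$ and $\gamma=\frac{2-\beta}{2}$, the first-derivative term acquires a coefficient proportional to $2\alpha+n-2$, which vanishes, while the undifferentiated term carries the coefficient $\lambda^2(n)-\mu^2(\beta,k)\gamma^2$.

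The decisive algebraic identity is $\mu^2(\beta,k)\gamma^2=\mu^2(k)$, which is exactly why the order was chosen as $\mu(\beta,k)=\frac{2\mu(k)}{2-\beta}$: it makes this coefficient cancel against the potential coefficient $\mu^2(k)-\lambda^2(n)$ of $A_{\mu(k)}$, so that the $r^{(2-n)/2-2}J_\mu$ term disappears entirely. What survives is the single term $c^2\gamma^2\,r^{(2-n-2\beta)/2}J_\mu$, and since $c^2\gamma^2=(c\gamma)^2=\rho^{2-\beta}$, restoring the prefactor $\rho^{(2-n-2\beta)/2}$ reassembles exactly $\rho^{2-\beta}U(r\rho)$, as desired. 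I expect the main obstacle to be twofold: the careful exponent-and-coefficient bookkeeping in the Bessel substitution (ensuring both the first-order term and the $1/r^2$ term cancel), and the rigorous verification that the integration-by-parts boundary terms vanish, which requires the decay and regularity of $\phi$ together with the known asymptotics of $J_\mu$ near $0$ and $\infty$.
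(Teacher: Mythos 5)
Your proposal is correct in all three parts, and for (i) and (iii) it follows essentially the paper's own route: the paper dismisses (i) as immediate from the definition (your kernel-symmetry-plus-Fubini argument is exactly what is meant, since $U(r\rho)$ and $V(r\rho)$ depend only on the product), and for (iii) the paper likewise integrates by parts and invokes a second-order Bessel-type ODE satisfied by $U$, quoted from Watson, to identify the resulting bracket with $(r\rho)^{2-\beta}U(r\rho)$. Your variant of (iii) --- transferring $r^\beta U(r\rho)$ under the plain $r^{n-1}dr$ pairing and deriving the kernel eigen-relation $A_{\mu(k)}\bigl[r^\beta U(r\rho)\bigr]=\rho^{2-\beta}U(r\rho)$ from the standard Bessel equation via $z=c\,r^{(2-\beta)/2}$ --- is arithmetically equivalent, and your bookkeeping checks out: after eliminating $J_\mu''$ the surviving $J_\mu'$ coefficient is $c\gamma(2\alpha+n-2)=0$, and the zeroth-order coefficient $\lambda^2-\mu^2(\beta,k)\gamma^2=\lambda^2-\mu^2(k)$ cancels the potential term, so your route has the mild advantage of obtaining the kernel ODE from the classical Bessel equation rather than citing it as a black box. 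Where you genuinely diverge is (ii): the paper does \emph{not} redo a closure-relation computation; it instead observes the intertwining identity $\mathcal{H}_\mu(r^{\beta}\phi)(\rho)=\rho^{-\beta}\mathcal{H}_\mu^{-1}\phi(\rho)$ (reflecting $U(w)=w^{-\beta}V(w)$) and combines it with (i) and the already-verified inversion $\mathcal{H}_\mu^{-1}\mathcal{H}_\mu=\mathrm{Id}$ to obtain the bilinear identity $\langle\mathcal{H}_\mu\phi,\rho^{\beta}\mathcal{H}_\mu\psi\rangle=\langle\phi,r^{-\beta}\psi\rangle$, of which (ii) is the diagonal case $\phi=\psi$. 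Your direct expansion using $\int_0^\infty\sigma J_\mu(a\sigma)J_\mu(b\sigma)\,d\sigma=a^{-1}\delta(a-b)$ is also correct --- the Jacobian factors $s^{\beta/2}$ and $a^{-1}$ do combine to convert the weight $\rho^{\beta+n-1}$ into $r^{n-1-\beta}$ --- but it repeats the formal delta-function manipulation a second time, whereas the paper's algebraic route recycles that single formal step and yields the polarized identity for free; conversely, your computation is self-contained and does not presuppose the inversion formula, so the two approaches trade generality for independence at the same overall level of (formal) rigor.
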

\begin{proof}
(i) This is obvious from definition.\\
(ii) Observe that
$$\mathcal{H}_\mu(r^{\beta}\phi(r))(\rho)=\rho^{-\beta}\mathcal{H}_\mu^{-1}(\phi(r))(\rho),$$
by combining property (i), one has
\begin{eqnarray*}
&&<\mathcal{H}_\mu\phi(\rho),\rho^{\beta}\mathcal{H}_\mu\psi(\rho)>=<\phi(r),\mathcal{H}_\mu(\rho^{\beta}\mathcal{H}_\mu\psi(\rho))(r)>\\
&&=<\phi(r),r^{-\beta}\mathcal{H}_\mu^{-1}(\mathcal{H}_\mu\psi(\rho))(r)>=<\phi(r),r^{-\beta}\psi(r)>.
\end{eqnarray*}
(iii) Using Definition 2.1 and integrating by parts, we have
\begin{eqnarray}\mathcal{H}_{\mu(\beta,k)}(r^\beta A_{\mu(k)}\phi)(\rho)&=&\int_0^\infty (-\partial^2_r-\frac{n-1}r\partial_r+\frac{\mu(k)^2-\lambda^2}{r^2})\phi(r)U(r\rho)r^{\beta+n-1}dr\nonumber\\
&=&\int_0^\infty r^{\beta+n-3}\bigg\{\left(-\beta(\beta+n-2)+\mu(k)^2-\lambda^2\right)U(r\rho)\nonumber\\
&&+(-2\beta-n+1)rsU^\prime(r\rho)-(r\rho)^2U^{\prime\prime}(r\rho)\bigg\}\phi(r)dr.\label{braces}
\end{eqnarray}
It is evident
$U(r\rho)=(r\rho)^{\frac{2-n-2\beta}2}J_{\mu(\beta,k)}(\frac2{2-\beta}(r\rho)^{\frac{2-\beta}2})$,
and it satisfies the following Bessel equation \cite{Watson}
$$(r\rho)^2U^{\prime\prime}(r\rho)+(n+2\beta-1)rsU^\prime(r\rho)+\big\{(r\rho)^{2-\beta}
+(1-\beta-\frac{n}2)^2-(\frac{2-\beta}2\mu(\beta,k))^2\big\}U(r\rho)=0.$$
Recalling $\mu(\beta,k)=\frac{2}{2-\beta}\mu(k)$,
 we find that the terms in braces of \eqref{braces} is equal to
 $(r\rho)^{2-\beta}U(r\rho)$.
Thus,
$$\mathcal {H}_{\mu(\beta,k)}(r^\beta A_{\mu(k)}\phi)=\int_0^\infty
r^{\beta+n-3}(r\rho)^{2-\beta}
U(r\rho)\phi(r)dr=\rho^{2-\beta}\mathcal {H}_\mu\phi(\rho).$$
\end{proof}
Further we introduce the $\beta$-Hankel convolution operator
$\sharp$.
\begin{defn}
Let $\alpha=\beta-k$, $U$ and $V$ be defined as above. We define the
Delsarte's kernel:
$$D(x,y,z):=\int_0^\infty \eta^\alpha V(x\eta)U(y\eta)U(z\eta)\eta^{n-1}d\eta,$$
the Hankel translate function:
$$f^{*}(x,y):=\int_0^\infty f(z)D(x,y,z)z^{n-1}dz,$$
and the $\beta$-Hankel convolution operator $\sharp$:
$$f\sharp g(x):= \int^\infty_0f^*(x,y)g(y)y^{n-1}dy.$$
\end{defn}
From Definition 2.3, we easily get
\begin{eqnarray*}
f\sharp g(x)&= &\int^\infty_0f^*(x,y)g(y)y^{n-1}dy\\
&=&\int^\infty_0 g(y)y^{n-1}dy\int^\infty_0f(z)D(x,y,z)z^{n-1}dz\\
&=&\mathcal{H}^{-1}(\eta^\alpha\mathcal{H}g(\eta)\mathcal{H}f(\eta))(x),
\end{eqnarray*}
which implies
\begin{equation}\mathcal{H}(f\sharp g)(\eta)=\eta^\alpha \mathcal{H}g(\eta)\mathcal{H}f(\eta).\end{equation}
We summary the properties about Delsarte's kernel $D(x,y,z)$ in the
follow proposition.
\begin{pro}
The following identity holds for $D(x,y,z)$ defined above:
$$D(x,y,z)=\frac{(xyz)^{-\lambda-\beta}x^\beta}{1-\beta/2}\cdot [(\frac2{2-\beta})^3(xyz)^{\frac{2-\beta}2}]^{-\mu}\cdot\frac{2^{\mu-1}\bigtriangleup^{2\mu-1}}{\Gamma(\mu+\frac12)\Gamma(\frac12)}$$
where $\bigtriangleup$ is the area of a triangle with sides
$(\frac{2(x)^\frac{2-\beta}2}{2-\beta},\frac{2(y)^\frac{2-\beta}2}{2-\beta},\frac{2(z)^\frac{2-\beta}2}{2-\beta})$
if such a triangle exits or zero otherwise. Besides, we have
\begin{eqnarray}
\int^\infty_0x^{k-\beta}D(x,y,z)x^{n-1}dx&=&\Gamma(\mu+1)^{-1}(2-\beta)^{-\mu}(yz)^{k-\beta}\label{x};\\
\int^\infty_0y^{k}D(x,y,z)y^{n-1}dy&=&\Gamma(\mu+1)^{-1}(2-\beta)^{-\mu}x^{k}z^{k-\beta}\label{y};\\
\int^\infty_0z^{k}D(x,y,z)z^{n-1}dz&=&\Gamma(\mu+1)^{-1}(2-\beta)^{-\mu}x^{k}y^{k-\beta}\label{z}.
\end{eqnarray}
\end{pro}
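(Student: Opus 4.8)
The plan is to reduce the defining integral for $D(x,y,z)$ to the classical Sonine--Gegenbauer integral for a product of three Bessel functions of equal order, and then to extract the three moment identities by Fubini together with the Hankel transform of a power. First I would substitute the definitions of $U$ and $V$ into
$$D(x,y,z)=\int_0^\infty \eta^\alpha V(x\eta)U(y\eta)U(z\eta)\,\eta^{n-1}d\eta,$$
collect the algebraic prefactors, and perform the change of variable $t=\eta^{(2-\beta)/2}$. Writing $a=\tfrac{2}{2-\beta}x^{(2-\beta)/2}$, $b=\tfrac{2}{2-\beta}y^{(2-\beta)/2}$, $c=\tfrac{2}{2-\beta}z^{(2-\beta)/2}$, the three Bessel factors become $J_\mu(at)J_\mu(bt)J_\mu(ct)$. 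A careful bookkeeping of exponents — using $\alpha=\beta-k$, $\lambda=\tfrac{n-2}{2}$ and $\mu=\tfrac{2k+n-2}{2-\beta}$ — shows the residual power of $t$ is exactly $t^{1-\mu}$, so that
$$D(x,y,z)=\tfrac{2}{2-\beta}\,x^{(2-n)/2}y^{(2-n-2\beta)/2}z^{(2-n-2\beta)/2}\int_0^\infty J_\mu(at)J_\mu(bt)J_\mu(ct)\,t^{1-\mu}\,dt.$$

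Next I would invoke the classical formula \cite{Watson}
$$\int_0^\infty J_\mu(at)J_\mu(bt)J_\mu(ct)\,t^{1-\mu}\,dt=\frac{2^{\mu-1}\bigtriangleup^{2\mu-1}}{\Gamma(\tfrac12)\Gamma(\mu+\tfrac12)\,(abc)^\mu},$$
valid for $\mu>-\tfrac12$, where $\bigtriangleup$ is the area of the triangle with sides $a,b,c$ and the integral vanishes when no such triangle exists. Here $(abc)^\mu=[(\tfrac{2}{2-\beta})^3(xyz)^{(2-\beta)/2}]^\mu$ reproduces the middle factor of the claim, $\Gamma(\tfrac12)=\sqrt{\pi}$ supplies the last factor, and combining $\tfrac{2}{2-\beta}=(1-\beta/2)^{-1}$ with the collected prefactor gives $(xyz)^{-\lambda-\beta}x^\beta$ after using $-\lambda-\beta=(2-n-2\beta)/2$; this is precisely the stated closed form.

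For the three moment identities, consider the representative \eqref{z}, namely $\int_0^\infty z^k D(x,y,z)z^{n-1}dz=\Gamma(\mu+1)^{-1}(2-\beta)^{-\mu}x^k y^{k-\beta}$. I would interchange the order of integration to write it as $\int_0^\infty \eta^\alpha V(x\eta)U(y\eta)\,\mathcal{H}_\mu(z^k)(\eta)\,\eta^{n-1}d\eta$, evaluate the inner Hankel transform of the power $z^k$ by the standard single-Bessel (Weber--Schafheitlin) integral — which returns a pure power of $\eta$ times Gamma factors — and then recognize the remaining integral as another Hankel transform of a power, producing the claimed monomial in $x$ and $y$. The identities \eqref{x} and \eqref{y} follow in the same way with the roles of the variables permuted; alternatively each can be obtained by integrating the explicit triangle-area formula against the appropriate power, the triangle inequality converting the integral into a Beta integral that supplies the factor $\Gamma(\mu+1)^{-1}$.

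I expect the main obstacle to be twofold: establishing the three-Bessel integral with the correct triangle-area geometry and constants, which is the analytic heart of the statement, and rigorously justifying the Fubini interchanges together with the single-Bessel evaluations, since the Bessel integrals involved are only conditionally (oscillatory) convergent. Controlling them relies on the standing hypotheses $\mu>-\tfrac12$ and $\beta\in[0,2)$, and is most cleanly handled by an Abel or distributional regularization before passing to the limit.
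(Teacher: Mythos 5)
Your treatment of the closed-form identity is essentially the paper's own proof: the paper likewise substitutes the definitions of $U$ and $V$, pulls out the prefactor $\frac{(xyz)^{-\lambda-\beta}x^{\beta}}{1-\beta/2}$, changes variable to $t=\eta^{(2-\beta)/2}$ so that the residual weight is $t^{1-\mu}$, and quotes Watson's Sonine--Gegenbauer formula for $\int_0^\infty J_\mu(at)J_\mu(bt)J_\mu(ct)\,t^{1-\mu}\,dt$ with the triangle area $\bigtriangleup$. Your exponent bookkeeping checks out, so this half needs no changes.

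For the moment identities \eqref{x}--\eqref{z} the paper argues differently from both of your suggestions, and more simply. It notes that Definition 2.3 together with the inversion of the $\beta$-Hankel transform gives, for all $s>0$, the identity \eqref{ii1},
$$\int_0^\infty U(xs)\,D(x,y,z)\,x^{n-1}\,dx=s^{\alpha}\,U(ys)\,U(zs),$$
then multiplies by $s^{\alpha}$ and lets $s\to0$, using the small-argument asymptotics $J_\mu(w)\sim (w/2)^{\mu}/\Gamma(\mu+1)$ to get $\lim_{s\to0}s^{\alpha}U(xs)=\Gamma(\mu+1)^{-1}(2-\beta)^{-\mu}x^{k-\beta}$; the constant and the monomials fall out at one stroke. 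Your primary route (Fubini plus Weber--Schafheitlin) has a genuine flaw, not just a justification gap: for the representative \eqref{z}, the inner integral $\int_0^\infty z^{k+n-1}U(z\eta)\,dz$ becomes, after $u=z^{(2-\beta)/2}$, a multiple of $\int_0^\infty u^{\mu+1}J_\mu(cu)\,du$ with $c=\tfrac{2}{2-\beta}\eta^{(2-\beta)/2}$, and this lies outside the Weber--Schafheitlin range: $\int_0^\infty t^{s-1}J_\mu(t)\,dt$ requires $s<3/2$, whereas here $s=\mu+2$. The integral is therefore not ``a pure power of $\eta$ times Gamma factors'' but a distribution concentrated at $\eta=0$, and any honest Abel regularization of your interchange collapses precisely into a small-argument limit of the Bessel kernels --- that is, into the paper's $s\to0$ argument. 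Your fallback route, integrating the explicit triangle-area formula, is sound and self-contained: the triangle condition makes the relevant one-variable integral proper (compact support in $c=\tfrac{2}{2-\beta}z^{(2-\beta)/2}$), Heron's identity $16\bigtriangleup^{2}=\big((a+b)^2-c^2\big)\big(c^2-(a-b)^2\big)$ converts it into a Beta integral, and the Legendre duplication formula then yields $\Gamma(\mu+1)^{-1}$; note this needs $2\mu-1>-1$ at the endpoints, which holds except in the borderline case $k=0$, $n=2$. So either adopt the paper's limiting identity \eqref{ii1}, which also covers that borderline case, or commit fully to the Beta-integral computation and drop the Fubini route.
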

\begin{proof}
Writing $D(x,y,z)$ in integration, we easily find
\begin{eqnarray*}
&&D(x,y,z)=\int_0^\infty \eta^{\beta-k}V(x\eta)U(y\eta)U(z\eta)\eta^{n-1}d\eta\\
&&=\frac{(xyz)^{-\lambda-\beta}x^\beta}{1-\beta/2}\int_0^\infty
J_{\mu}(\frac{2(x\eta)^\frac{2-\beta}2}{2-\beta})
J_{\mu}(\frac{2(y\eta)^\frac{2-\beta}2}{2-\beta})J_{\mu}(\frac{2(z\eta)^\frac{2-\beta}2}{2-\beta})\frac{d\eta^{\frac{2-\beta}2}}{\eta^{(1-\beta/2)(\mu-1)}}.\\
\end{eqnarray*}
Thus, the first identity follows from \cite{Watson}. The proof of
the next three integration identities are basically the same, so we
only prove $\eqref{x}$. According to Definition 2.3, we immediately
have:
\begin{equation}\int^\infty_0U(xs)D(x,y,z)x^{n-1}dx=s^{\alpha}U(ys)U(zs).\label{ii1}\end{equation}
By the asymptotic behavior of Bessel function,
$$J_{\nu-\frac12}(x)\sim\frac1{\Gamma(\nu+\frac12)}(\frac x2)^{\nu-\frac12},$$
we get
\begin{equation*}
\lim_{s\rightarrow0}{s^\alpha}{
U(xs)}=\lim_{s\rightarrow0}s^{\alpha}
(xs)^{\frac{2-n-2\beta}2}J_{\mu}(\frac2{2-\beta}(xs)^{\frac{2-\beta}2})
=\Gamma(\mu+1)^{-1}(2-\beta)^{-\mu}x^{k-\beta}.
\end{equation*}
Similarly, one will also find
$$\lim_{s\rightarrow0}{s^{-k}}{ V(ys)}=\Gamma(\mu+1)^{-1}(2-\beta)^{-\mu}y^{k}.$$
Multiplying $s^{\alpha}$ to \eqref{ii1} and let $s$ go to 0 on both
sides, we derive \eqref{x}.
\end{proof}
With this proposition, we can prove the next lemma which is the
Young's inequality for $\beta$-Hankel convolution.
\begin{lem}
For the convolution $\sharp$ defined above, we have:
$$\left(\int_0^\infty|\frac{f\sharp g(x)}{x^k}|^ax^{2k+n-1-\beta}dx\right)^{\frac1a}$$
$$\leq\left|\Gamma(\mu+1)^{-1}(2-\beta)^{-\mu}\right| \left(\int_0^\infty|\frac{f(z)}{z^k}|^{b}z^{2k+n-1-\beta}dz\right)^{\frac1b}\left(\int_0^\infty|\frac{g(y)}{y^k}|^cy^{2k+n-1-\beta}dy\right)^{\frac1c},$$
where $1+\frac1a=\frac1b+\frac1c$.
\end{lem}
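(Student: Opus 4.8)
The plan is to recast the stated inequality as a weighted Young inequality for an integral operator whose kernel has all three single-variable ``marginals'' equal to the constant $C:=|\Gamma(\mu+1)^{-1}(2-\beta)^{-\mu}|$, and then to deduce it from a duality-plus-H\"older argument. Since $D(x,y,z)\ge 0$ by Proposition 2.4, we have the pointwise bound $|f\sharp g|\le |f|\sharp|g|$, so it suffices to treat $f,g\ge 0$. Writing out the convolution,
\[
f\sharp g(x)=\int_0^\infty\int_0^\infty D(x,y,z)\,f(z)\,g(y)\,z^{n-1}y^{n-1}\,dz\,dy ,
\]
I introduce the reduced functions $F(z)=f(z)/z^{k}$, $G(y)=g(y)/y^{k}$, $H(x)=f\sharp g(x)/x^{k}$ and the weight $d\nu(r)=r^{2k+n-1-\beta}\,dr$, in which all three norms of the statement are taken. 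A direct weight computation gives $z^{k}z^{n-1}\,dz=z^{\beta-k}\,d\nu(z)$, and likewise in $y$, so that
\[
H(x)=\int_0^\infty\int_0^\infty \widetilde D(x,y,z)\,F(z)\,G(y)\,d\nu(z)\,d\nu(y),\qquad \widetilde D(x,y,z):=x^{-k}\,y^{\beta-k}\,z^{\beta-k}\,D(x,y,z).
\]
The goal thus becomes $\|H\|_{L^a(d\nu)}\le C\,\|F\|_{L^b(d\nu)}\,\|G\|_{L^c(d\nu)}$ under $1+\tfrac1a=\tfrac1b+\tfrac1c$.

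The second step is to check, using the three integral identities \eqref{x}, \eqref{y}, \eqref{z} of Proposition 2.4, that each single-variable integral of $\widetilde D$ against $d\nu$ equals exactly $C$. For instance, integrating in $x$ turns the weight into $x^{-k}\,d\nu(x)=x^{k-\beta}x^{n-1}\,dx$, so \eqref{x} yields $\int_0^\infty\widetilde D\,d\nu(x)=y^{\beta-k}z^{\beta-k}\cdot C\,(yz)^{k-\beta}=C$; the $y$- and $z$-marginals reduce to \eqref{y} and \eqref{z} in the same way and are also equal to $C$. This is the step that makes the whole scheme close: the exponent $2k+n-1-\beta$ in the measure and the shift by $x^{k}$ are tuned precisely so that Proposition 2.4 produces a \emph{constant} marginal bound rather than a $y,z$-dependent one.

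Finally, I would prove the resulting abstract trilinear inequality. By duality it suffices to bound the symmetric form $I=\iiint \widetilde D\,F(z)G(y)\Phi(x)\,d\nu(z)\,d\nu(y)\,d\nu(x)$ by $C\,\|F\|_{b}\|G\|_{c}\|\Phi\|_{a'}$ for $\Phi\ge 0$. Because $1+\tfrac1a=\tfrac1b+\tfrac1c$ is equivalent to $\tfrac1{a'}+\tfrac1b+\tfrac1c=2$, I split the integrand as
\[
\widetilde D\,FG\Phi=\bigl(\widetilde D\,F^{b}G^{c}\bigr)^{1/a}\bigl(\widetilde D\,G^{c}\Phi^{a'}\bigr)^{1/b'}\bigl(\widetilde D\,F^{b}\Phi^{a'}\bigr)^{1/c'}
\]
and apply H\"older's inequality with the conjugate exponents $(a,b',c')$, which satisfy $\tfrac1a+\tfrac1{b'}+\tfrac1{c'}=1$. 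Each of the three resulting triple integrals is estimated by integrating out the variable absent from its integrand and invoking the matching marginal bound $\le C$ (namely \eqref{x}, \eqref{z}, \eqref{y} respectively); collecting the powers of $C$, $\|F\|_b$, $\|G\|_c$ and $\|\Phi\|_{a'}$, each of which comes out to exactly $1$ by the defining relations $\tfrac1a+\tfrac1{c'}=\tfrac1b$, $\tfrac1a+\tfrac1{b'}=\tfrac1c$, $\tfrac1{b'}+\tfrac1{c'}=\tfrac1{a'}$, gives $I\le C\,\|F\|_b\|G\|_c\|\Phi\|_{a'}$, which is the assertion. I expect the main obstacle to be exactly this exponent bookkeeping: choosing the split so that the powers of each function and of $\widetilde D$ balance, and checking that the degenerate endpoints ($a=\infty$, or $b$ or $c$ equal to $1$) are covered by the same argument under the usual conventions.
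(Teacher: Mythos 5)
Your argument is correct, and the exponent bookkeeping closes exactly as you predict: with $\frac1a+\frac1{b'}+\frac1{c'}=1$ the split $\widetilde D\,FG\Phi=(\widetilde D F^bG^c)^{1/a}(\widetilde D G^c\Phi^{a'})^{1/b'}(\widetilde D F^b\Phi^{a'})^{1/c'}$ balances all powers, each marginal of $\widetilde D$ against $d\nu$ equals $C=\Gamma(\mu+1)^{-1}(2-\beta)^{-\mu}$ by \eqref{x}, \eqref{y}, \eqref{z}, and the total power of $C$ comes out to $1$, matching the constant in the statement. However, your route is genuinely different from the paper's. The paper follows the classical Titchmarsh scheme for Young's inequality transplanted to the Hankel setting: it first proves $L^p$ bounds on the translate function $f^*(x,y)$ in each variable separately, via Jensen's inequality with respect to the (normalized) measure built from $D(x,y,z)$ --- this is where \eqref{y} and \eqref{z} enter --- and then performs a pointwise three-factor H\"older split of $f\sharp g(x)=\int_0^\infty f^*(x,y)g(y)y^{n-1}dy$ before integrating in $x$. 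You instead pass to the trilinear form by duality, symmetrize the three variables from the outset, and reduce the entire lemma to the single structural fact that the renormalized kernel $\widetilde D(x,y,z)=x^{-k}y^{\beta-k}z^{\beta-k}D(x,y,z)$ has all three $d\nu$-marginals equal to the constant $C$ (a Schur-test-type hypothesis); note your scheme uses \eqref{x} essentially, whereas the paper's written proof only invokes \eqref{y} and \eqref{z}. Your version buys transparency: the three variables play symmetric roles, the power of the constant is manifestly $1$, and the argument generalizes verbatim to any nonnegative kernel with constant marginals; the paper's version buys the intermediate translate estimates on $f^*$, which have independent interest, at the cost of heavier (and in the source, somewhat garbled) exponent notation. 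Two small points to tend to in a final write-up: justify the reduction to $f,g\ge 0$ by noting $D\ge 0$ follows from the explicit formula in Proposition 2.4 (the prefactor is positive for $\beta\in[0,2)$ and $\triangle^{2\mu-1}\ge 0$), and handle the degenerate endpoints $a=\infty$ or $b=1$ or $c=1$ separately as you indicate, since the H\"older triple $(a,b',c')$ then degenerates.
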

\begin{proof}

We start from the integration of translate function $f^*(x,y)$.
\begin{eqnarray*}
&&\int_0^\infty|\frac{f^*(x,y)}{y^{k-\beta}}|^py^{2k+n-1-\beta}dy=\int_0^\infty|f^*(x,y)|^py^{2k+n-1-\beta-p(k-\beta)}dy\\
&=&\int_0^\infty\left|\int_0^\infty\frac{f(z)}{z^k}(xy)^ky^{-\beta}\frac{D(x,y,z)}{(xy)^ky^{-\beta}}z^{k+n-1}dz\right|^py^{2k+n-1-\beta-p(k-\beta)}dy.
\end{eqnarray*}
From \eqref{z} we konw
$$\int^\infty_0z^{k}D(x,y,z)z^{n-1}dz=\Gamma(\mu+1)^{-1}(2-\beta)^{-\mu}x^{k}y^{k-\beta}.$$
By applying Jensen's inequality on has
$$\left|\int_0^\infty\frac{f(z)}{z^k}(xy)^ky^{-\beta}\frac{D(x,y,z)}{(xy)^ky^{-\beta}}z^{k+n-1}dz\right|^p$$
$$\leq(\Gamma(\mu+1)^{-1}(2-\beta)^{-\mu})^{p-1}\int_0^\infty\left|\frac{f(z)}{z^k}(xy)^ky^{-\beta}\right|^p\frac{D(x,y,z)}{(xy)^ky^{-\beta}}z^{k+n-1}dz.$$

By changing the order of integration again
\begin{eqnarray*}
&&\int_0^\infty|\frac{f^*(x,y)}{y^{k-\beta}}|^py^{2k+n-1-\beta}dy\leq(\Gamma(\mu+1)^{-1}(2-\beta)^{-\mu})^{p-1}\\
&&\cdot\int_0^\infty\left|\frac{f(z)}{z^k}\right|^pd\sigma(z)\int_0^\infty[(xy)^ky^{-\beta}]^{p-1}D(x,y,z)z^{k+n-1}y^{2k+n-1-\beta-p(k-\beta)}dy\\
&=&(\Gamma(\mu+1)^{-1}(2-\beta)^{-\mu})^{p-1}\int_0^\infty\left|\frac{f(z)}{z^k}\right|^pd\sigma(z)x^{k(p-1)}\int_0^\infty D(x,y,z)d\sigma(y)\\
&=&(\Gamma(\mu+1)^{-1}(2-\beta)^{-\mu})^{p}x^{kp}\int_0^\infty\left|\frac{f(z)}{z^k}\right|^pz^{2k+n-1-\beta}dz.\\
\end{eqnarray*}
Similarly, one can obtain
$$\int_0^\infty|\frac{f^*(x,y)}{x^{k}}|^px^{2k+n-1-\beta}dx
\leq(\Gamma(\mu+1)^{-1}(2-\beta)^{-\mu})^{p}y^{(k-\beta)p}\int_0^\infty\left|\frac{f(z)}{z^k}\right|^pz^{2k+n-1-\beta}dz.$$
Let $d\eta(x)=x^{2k+n-1-\beta}dx$. By Young's inequality
(\cite{Titchmarsh}), we have
\begin{eqnarray*}
|f\sharp g|&=&|\int_0^\infty f^*(x,y)g(y)y^{n-1}dy|=|\int_0^\infty\frac{f^*(x,y)}{y^{k-\beta}}\cdot\frac{g(y)}{y^k}\cdot y^{2k+n-1-\beta}dy|\\
&\leq& \left(\int_0^\infty|\frac{f^*(x,y)}{y^{k-\beta}}|^p\cdot|\frac{g(y)}{y^k}|^{q}d\eta(y)\right)^{\frac1m}\\
&&\cdot\left(\int_0^\infty|\frac{f^*(x,y)}{y^{k-\beta}}|^{p}d\eta(y)\right)^{1-\frac1q}\cdot\left(\int_0^\infty|\frac{g(y)}{y^k}|^{q}d\eta(y)\right)^{1-\frac1p}\\
&=&I\cdot II \cdot III.
\end{eqnarray*}
And we have
$$II^{\frac
q{q-1}}\leq(\Gamma(\mu+1)^{-1}(2-\beta)^{-\mu})^{p}x^{kp}\int_0^\infty\left|\frac{f(z)}{z^k}\right|^pd\eta(z)$$
and
\begin{eqnarray*}
&&\int_0^\infty |\frac{f\sharp g}{x^k}|^md\eta(x)\leq\int_0^\infty (\frac{I\cdot II \cdot III}{x^k})^md\eta(x)\\
&&=\left(\int_0^\infty|\frac{g(y)}{y^k}|^{q}d\eta(y)\right)^{\frac {m(p-1)}p} \int_0^\infty \left(\int_0^\infty|\frac{f^*(x,y)}{y^{k-\beta}}|^{p}\cdot|\frac{g(y)}{y^k}|^{q}d\eta(y)\right)\cdot II^m x^{-km}d\eta(x) \\
&&\leq(\Gamma(\mu+1)^{-1}(2-\beta)^{-\mu})^{\frac{mp(q-1)}{q}}\left(\int_0^\infty|\frac{g(y)}{y^k}|^{q}d\eta(y)\right)^{\frac {(p-1)m}p} \left(\int_0^\infty|\frac{f(z)}{z^k}|^{p}d\eta(z)\right)^{\frac {m(q-1)}q}\\
&&\cdot\int_0^\infty \left(\int_0^\infty|\frac{f^*(x,y)}{y^{k-\beta}}|^{p}\cdot|\frac{g(y)}{y^k}|^{q}d\eta(y)\right)\cdot x^{k\frac {mp(q-1)}{q}} x^{-km}d\eta(x) \\
&&=(\Gamma(\mu+1)^{-1}(2-\beta)^{-\mu})^{\frac{pm(q-1)}{q}}\left(\int_0^\infty|\frac{g(y)}{y^k}|^{q}d\eta(y)\right)^{\frac {(p-1)m}p} \left(\int_0^\infty|\frac{f(z)}{z^k}|^{p}d\eta(z)\right)^{\frac {(q-1)m}q}\\
&&\cdot\int_0^\infty|\frac{g(y)}{y^k}|^{q}y^{(\beta-k)p}d\eta(y)\cdot \int_0^\infty|\frac{f^*(x,y)}{x^{k}}|^{p}d\eta(x)\\
&&\leq(\Gamma(\mu+1)^{-1}(2-\beta)^{-\mu})^{\frac{mp(q-1)}{q}}\left(\int_0^\infty|\frac{g(y)}{y^k}|^{q}d\eta(y)\right)^{\frac {m(p-1)}p} \left(\int_0^\infty|\frac{f(z)}{z^k}|^{p}d\eta(z)\right)^{\frac {m(q-1)}q}\\
&&\cdot\int_0^\infty|\frac{g(y)}{y^k}|^{q}d\eta(y)\cdot (\Gamma(\mu+1)^{-1}(2-\beta)^{-\mu})^{p}\int_0^\infty\left|\frac{f(z)}{z^k}\right|^pd\eta(z)\\
&&=(\Gamma(\mu+1)^{-1}(2-\beta)^{-\mu})^{m}\left(\int_0^\infty|\frac{g(y)}{y^k}|^{q}d\eta(y)\right)^{\frac {m}q} \left(\int_0^\infty|\frac{f(z)}{z^k}|^{p}d\eta(z)\right)^{\frac {m}p}.\\
\end{eqnarray*}
That is,
$$\left(\int_0^\infty |\frac{f\sharp g}{x^k}|^md\eta(x)\right)^{\frac1m}\leq(\Gamma(\mu+1)^{-1}(2-\beta)^{-\mu})\left(\int_0^\infty|\frac{g(y)}{y^k}|^{q}d\eta(y)\right)^{\frac {1}q} \left(\int_0^\infty|\frac{f(z)}{z^k}|^{p}d\eta(z)\right)^{\frac {1}p}.$$
\end{proof}
Finally, we can derive the integral solution of linear $k$-th model
\eqref{kth}. Applying the $\beta$-Hankel transform, we get
\begin{equation*}\label{**}
\left\{\begin{array}{ll}\partial_t \mathcal{H}_{\mu}v +\rho^{2-\beta}\mathcal{H}_{\mu}v =\mathcal{H}_{\mu}b, \\
\mathcal{H}_{\mu}v (0,\rho)=(\mathcal{H}_{\mu}a )(\rho).
\end{array}\right.\end{equation*}
Solving the ODE and further applying $\mathcal{H}_{\mu}^{-1}$, we
get its explicit solution formula which can be also represented in
terms of Hankel convolution:
\begin{eqnarray*}
v (t,r)&=&\mathcal{H}_{\mu}^{-1}[\exp(-\rho^{2-\beta}t)\mathcal{H}_{\mu}a ](r)+\mathcal{H}_{\mu}^{-1}[\int_0^t\exp(-\rho^{2-\beta}(t-\tau))\mathcal{H}_{\mu}b(\rho,\tau)d\tau ](r)\\
&=&\mathcal{H}_{\mu}^{-1}[\frac{\exp(-\rho^{2-\beta}t)}{\rho^{\beta-k}}](r)\sharp
a
(r)+\int_0^t\mathcal{H}_{\mu}^{-1}[\frac{\exp(-\rho^{2-\beta}(t-\tau))}{\rho^{\beta-k}}](r)\sharp
b(r,\tau)d\tau.
\end{eqnarray*}
Define the solution's kernal $K_{\mu}(r,t)$  by
$$K_{\mu}(r,t):=\mathcal{H}_{\mu}^{-1}[\frac{\exp(-\rho^{2-\beta}t)}{\rho^{\beta-k}}](r)$$
and the solution semigroup $S_{\mu}(t)(\triangleq e^{r^\beta
A_{\mu(k)}t})$ by
$$ S_{\mu}(t)f:=K_{\mu}(r,t) \sharp f,$$
then, the solution can be written in a simple form
\begin{equation}\label{mild}
v(t,r)=S_{\mu}(t)a(r)+\int_0^tS_{\mu}(t-\tau)b(\tau,r)d\tau.
\end{equation}

\section{Space-time estimates for the linear $k$-th model}
In this section, we analyse the kernal $K_{\mu}(r,t)$ and the
semigroup $S_{\mu}(t)$. After that we discuss the space-time
estimates of solution to the $k$-th model.

We start from the definition of $K_{\mu}(r,t)$.
\begin{eqnarray}
K_{\mu}(r,t)&=&\mathcal{H}^{-1}_{\mu}(\frac{\exp(-\rho^{2-\beta}t)}{\rho^{\beta-k}})(r)\nonumber\\
&=&\int_0^\infty(r\rho)^{-\lambda}J_{\mu}(\frac2{2-\beta}(r\rho)^{\frac{2-\beta}2})
\frac{\exp(-\rho^{2-\beta}t)}{\rho^{\beta-k}}\rho^{n-1}d\rho\nonumber\\
&=&\frac{2r^{-\lambda}}{2-\beta}\int_0^\infty\exp(-\rho^{2-\beta}t)J_{\mu}(\frac{2r^{\frac{2-\beta}{2}}}{2-\beta}\rho^{\frac{2-\beta}{2}})
(\rho^\frac{2-\beta}2)^{\mu+1}d\rho^{\frac{2-\beta}2}\nonumber\\
&=&\{(2-\beta)t\}^{-\mu-1}\exp(-\frac{r^{2-\beta}}{(2-\beta)^2t})r^k.\label{kernal}
\end{eqnarray}
The last equality is due to the identity from \cite{Watson}
\begin{equation}\label{Wat}\int_0^\infty J_{\nu}(at)\exp(-p^2t^2)t^{\nu+1}dt=\frac{a^\nu}{(2p^2)^{\nu+1}}\exp(-\frac{a^2}{4p^2}).\end{equation}
For the solution semigroup $S_{\mu}(t)$, by changing the order of
integration, we get
\begin{equation}\label{homo}S_{\mu}(t)a(r)
=\int_0^\infty\widetilde{ K}(\rho,r,t)a(\rho)\rho^{n-1}d\rho,
\end{equation}
where $\displaystyle\widetilde{K}(\rho,r,t):=$
$$\frac2{2-\beta}r^{-\lambda}\rho^{-\lambda-\beta}\int_0^\infty J_{\mu}(\frac2{2-\beta}(r\xi)^{\frac{2-\beta}2})
J_{\mu}(\frac2{2-\beta}(\rho\xi)^{\frac{2-\beta}2})\exp(-\xi^{2-\beta}t)\xi^{\frac{2-\beta}2}d\xi^{\frac{2-\beta}2}.
$$
This integral is equivalent to the Weber's second exponential
integral after a proper scaling calculation(See \cite{Watson} p395).
The convergence is secured by $\exp(-\xi^{2-\beta}t)$ and $\mu>0$.
Moreover, we have
$$\widetilde{K}(r,\rho,t)=\frac{r^{-\lambda}\rho^{-\lambda-\beta}}{(2-\beta)t}\cdot\exp\{-\frac{1}{t(2-\beta)^2}(r^{2-\beta}+\rho^{2-\beta})\}\cdot I_{\mu}(\frac{2(r\rho)^{\frac{2-\beta}2}}{(2-\beta)^2t}),$$
where $$I_{\mu}(x)=i^{-\mu}J_{\mu}(ix)$$ stands for the modified
Bessel function. Consequently,
\begin{eqnarray*}
S_{\mu}(t)a(r)&=&\frac{2r^{-\lambda}\exp(-\frac{r^{2-\beta}}{t(2-\beta)^2})}{(2-\beta)^2t}\cdot\\
&&\int_0^\infty
i^{-\mu}J_\mu(\frac{2ir^{\frac{2-\beta}2}}{(2-\beta)^2t}\rho^{\frac{2-\beta}2})\exp(-\frac{\rho^{2-\beta}}{t(2-\beta)^2})a(\rho)
\rho^{\frac{2-\beta}2(\mu+1)-k}d\rho^{\frac{2-\beta}2}.
\end{eqnarray*}

If we apply the H\"{o}lder's inequality directly and recall the
identity \eqref{Wat} again, we obtain:
\begin{eqnarray*}
&&|\frac{S_{\mu}(t)a(r)}{r^k}|\leq\|\frac{a(\rho)}{\rho^k}\|_{L^\infty}
\frac{2r^{-\lambda-k}\exp(-\frac{r^{2-\beta}}{t(2-\beta)^2})}{(2-\beta)^2t}\cdot\\
&&\left|\int_0^\infty
i^{-\mu}J_\mu(\frac{2ir^{\frac{2-\beta}2}}{(2-\beta)^2t}\rho^{\frac{2-\beta}2})\exp(-\frac{\rho^{2-\beta}}{t(2-\beta)^2})
\rho^{\frac{2-\beta}2(\mu+1)}d\rho^{\frac{2-\beta}2}\right|.
\end{eqnarray*}
That is
\begin{equation}\label{infty}
\|\frac{S_{\mu}(t)a(r)}{r^k}\|_{L^\infty(r)}\leq\|\frac{a(\rho)}{\rho^k}\|_{L^\infty(\rho)}.
\end{equation}
In fact, by applying Young's inequality in Lemma 2.5, we have the
following $L^p-L^q$ estimates for homogeneous part of the solution.
\begin{lem} Let $d\eta(r)=r^{2k+n-1-\beta}dr$ and $1\leq q\leq p\leq \infty$. Then $S_{\mu}(t)a(r)$ satisfies the following
estimates,
\begin{equation}\label{young}
\|\frac{S_{\mu}(t)a(r)}{r^k}\|_{L^p_{d\eta(r)}} \leq
C(\beta,\mu,p,q)
t^{\frac{2k+n-\beta}{2-\beta}(\frac1p-\frac1q)}\|\frac{
a(r)}{r^k}\|_{L^q_{d\eta(r)}},
\end{equation}
where constant $C(\beta,\mu,p,q)$ is independent of $k$.
\end{lem}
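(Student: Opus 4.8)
The plan is to express $S_{\mu}(t)a$ as the $\beta$-Hankel convolution $K_{\mu}(\cdot,t)\sharp a$ and then feed this into the Young's inequality of Lemma 2.5, letting the explicit kernel \eqref{kernal} carry all of the decay in $t$. For a prescribed pair $1\le q\le p\le\infty$ I would fix the remaining exponent $b$ through the Young relation $1+\frac1p=\frac1b+\frac1q$, equivalently $\frac1b=1+\frac1p-\frac1q\in(0,1]$, and apply Lemma 2.5 with $f=K_{\mu}(\cdot,t)$, $g=a$, output exponent $p$ and input exponents $(b,q)$. This at once gives
$$\left\|\frac{S_{\mu}(t)a}{r^k}\right\|_{L^p_{d\eta}} \le \left|\Gamma(\mu+1)^{-1}(2-\beta)^{-\mu}\right|\, \left\|\frac{K_{\mu}(\cdot,t)}{r^k}\right\|_{L^b_{d\eta}}\, \left\|\frac{a}{r^k}\right\|_{L^q_{d\eta}},$$
so the entire estimate reduces to a single weighted $L^b$-norm of the kernel.

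The second step is the evaluation of $\|K_{\mu}(\cdot,t)/r^k\|_{L^b_{d\eta}}$ from \eqref{kernal}. Since $K_{\mu}(r,t)/r^k=\{(2-\beta)t\}^{-\mu-1}\exp\!\big(-r^{2-\beta}/((2-\beta)^2t)\big)$ carries no power of $r$, its $b$-th power integrated against $d\eta(r)=r^{2k+n-1-\beta}dr$ is a pure Gaussian-type integral. I would substitute $w=r^{2-\beta}$; the weight collapses because $r^{2k+n-1-\beta}\,dr=\frac1{2-\beta}w^{\mu}\,dw$, using the key arithmetic identity $\frac{2k+n-2}{2-\beta}=\mu$, and the integral becomes the Gamma integral $\int_0^\infty e^{-bw/((2-\beta)^2t)}w^{\mu}\,dw=\Gamma(\mu+1)\big((2-\beta)^2t/b\big)^{\mu+1}$. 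Collecting constants yields
$$\left\|\frac{K_{\mu}(\cdot,t)}{r^k}\right\|_{L^b_{d\eta}} = \Big[\Gamma(\mu+1)\,b^{-(\mu+1)}(2-\beta)^{(\mu+1)(2-b)-1}\Big]^{1/b}\, t^{(\mu+1)\left(\frac1b-1\right)}.$$
Since $\mu+1=\frac{2k+n-\beta}{2-\beta}$ and $\frac1b-1=\frac1p-\frac1q$, the power of $t$ is exactly $\frac{2k+n-\beta}{2-\beta}\big(\frac1p-\frac1q\big)$ as claimed in \eqref{young}, and multiplying by the Young constant from Lemma 2.5 produces $C(\beta,\mu,p,q)$.

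Two endpoints deserve a separate word. When $p=q$ we have $b=1$, the kernel norm is just its $L^1$-mass, and the constant collapses to $1$. When $p=\infty$ (in particular the extreme case $b=\infty$) the kernel norm is the sup-norm $\{(2-\beta)t\}^{-\mu-1}$, which agrees with the limit of the formula above and is consistent with the crude bound \eqref{infty}; I would simply record that the unified formula extends continuously to these cases, so no separate argument is needed beyond noting the limits.

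The step I expect to demand the most care is the assertion that $C(\beta,\mu,p,q)$ is independent of $k$. The computation already shows that $k$ enters the constant only through $\mu=\mu(\beta,k)$ and through the explicit exponent of $t$, with no residual $k$-factor surviving from the weight or the dimension. If genuine uniform boundedness in $k$ is wanted, one tracks the $\mu$-asymptotics: after cancellation the $\Gamma$-dependence is $\Gamma(\mu+1)^{\frac1b-1}$ with exponent $\frac1b-1\le0$, so as $k\to\infty$ this factorial-type factor decays super-exponentially and dominates the remaining factors $b^{-(\mu+1)/b}$ and $(2-\beta)^{\pm\mu}$. Verifying that this decay overwhelms any exponential growth of $(2-\beta)^{-\mu}$ in the regime $\beta>1$ is the one place where I would make the bound on the constant fully explicit.
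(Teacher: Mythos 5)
Your proposal is correct and takes essentially the same route as the paper's proof: both represent $S_{\mu}(t)a=K_{\mu}(\cdot,t)\sharp a$, apply the Young inequality of Lemma 2.5 with the conjugate exponent determined by $1+\frac1p=\frac1m+\frac1q$, and evaluate the weighted norm of the explicit kernel \eqref{kernal} as a Gamma integral via the substitution $w=r^{2-\beta}$, arriving at the same constant (equivalent to the paper's $[(2-\beta)^{2\mu+1}\Gamma(\mu+1)]^{\frac1p-\frac1q}m^{-\frac{\mu+1}{m}}$) and the same power of $t$. If anything, your closing analysis of uniformity in $k$ --- noting that $\Gamma(\mu+1)$ carries the nonpositive exponent $\frac1p-\frac1q$ and its super-exponential decay dominates the growth of $(2-\beta)^{-\mu}$ when $\beta>1$ --- is more careful than the paper's one-line remark to the same effect.
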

\begin{proof}
Since $S_{\mu}(t)a(r)= K_{\mu}(r,t) \sharp a(r)$ and by recalling
\eqref{kernal}
we find that,
\begin{eqnarray*}
\|\frac{K_{\mu}(r,t)}{r^k}\|_{L^m_{d\eta(r)}}&=&\{(2-\beta)t\}^{-\mu-1}(\int_0^\infty\exp(-\frac{mr^{2-\beta}}{(2-\beta)^2t})r^{2k+n-1-\beta}dr)^\frac1m\\
&\leq&
(2-\beta)^{\frac{4k+2n-\beta-2}{(2-\beta)m}-\mu-1}m^{-\frac{2k+n-\beta}{(2-\beta)m}}\Gamma(\mu+1)^{\frac1m}
t^{\frac{2k+n-\beta}{2-\beta}(\frac1m-1)}.
\end{eqnarray*}
Hence, the Young's inequality gives the desired inequality
\eqref{young} by taking $1+\frac1p=\frac1m+\frac1q$ and
\begin{equation}\label{constant}
C(\beta,\mu,p,q)=[(2-\beta)^{(2\mu+1)}\Gamma(\mu+1)]^{\frac1p-\frac1q}m^{-\frac{2k+n-\beta}{(2-\beta)m}}.
\end{equation}
Since $\frac1p-\frac1q\leq0$ and $(2-\beta)^{(2\mu+1)}\Gamma(\mu+1)$
goes to infinity as $k\rightarrow\infty$, we find that the constant
$C(\beta,\mu,p,q)$ is independent of $k$.
\end{proof}
It is remarkable that \eqref{young} generalizes the result of
\eqref{infty} by taking $m=1$, $p=q$,
$$\|\frac{S_{\mu}(t)a(r)}{r^k}\|_{L^p_{d\eta(r)}}
\leq \|\frac{ a(r)}{r^k}\|_{L^p_{d\eta(r)}}.$$ At the moment we
state the space-time estimates for the homogeneous part of solution
$v$ given in \eqref{mild}. Its proof can be made by following
\cite{Giga} (see also \cite{Miao08}).
\begin{lem}
(i) Let $\psi$ satisfy
$\|\frac{\psi}{r^k}\|_{L^q_{d\eta(r)}}<\infty$ and $(m,p,q)$ be any
admissible triplet. Then, $\frac{S_{\mu}(t)\psi}{r^k}\in
L^m(I;L_{d\eta(r)}^p(\mathbb{R}^+))\cap
C_b(I;L_{d\eta(r)}^q(\mathbb{R}^+))$ with the estimate
\begin{equation}\label{s-t1}\|\frac{S_{\mu}(t)\psi}{r^k}\|_{L^m(I;L_{d\eta(r)}^p)}\leq C\|\frac{\psi}{r^k}\|_{L^q_{d\eta(r)}}\end{equation}
for $0<T\leq\infty$, where $C$ is a positive constant independent of $k$.\\
(ii) Let $\psi$ satisfy
$\|\frac{\psi}{r^k}\|_{L^q_{d\eta(r)}}<\infty$ and $(m,p,q)$ be any
generalized admissible triplet. Then, $\frac{S_{\mu}(t)\psi}{r^k}\in
\mathcal{C}_m(I;L_{d\eta(r)}^p(\mathbb{R}^+))\cap
C_b(I;L_{d\eta(r)}^q(\mathbb{R}^+))$ with the estimate
\begin{equation}\label{s-t2}\|\frac{S_{\mu}(t)\psi}{r^k}\|_{\mathcal{C}_m(I;L_{d\eta(r)}^p)}\leq C\|\frac{\psi}{r^k}\|_{L^q_{d\eta(r)}}\end{equation}
for $0<T\leq\infty$, where $C$ is a positive constant independent of $k$.\\
Hereafter, for a Banach space $X$, we denote by $C_b(I;X)$ the space
of bounded continuous functions from $I$ to $X$.
\end{lem}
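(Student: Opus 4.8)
The plan is to derive both estimates from the single decay bound \eqref{young} of Lemma 3.1, whose temporal rate is exactly matched to the admissibility relation. Writing $w=\psi/r^k$ and substituting the defining relation $\frac1m=\frac{n-\beta+2k}{2-\beta}\left(\frac1q-\frac1p\right)$ of a (generalized) admissible triplet into the exponent $\frac{2k+n-\beta}{2-\beta}\left(\frac1p-\frac1q\right)$ of \eqref{young}, that estimate becomes
$$\left\|\frac{S_\mu(t)\psi}{r^k}\right\|_{L^p_{d\eta}}\leq C\,t^{-\frac1m}\left\|\frac{\psi}{r^k}\right\|_{L^q_{d\eta}},\qquad t>0.$$
This inequality, together with the contraction $\|S_\mu(t)\psi/r^k\|_{L^q_{d\eta}}\leq\|\psi/r^k\|_{L^q_{d\eta}}$ coming from the case $p=q$ of \eqref{young}, is what I would carry throughout.

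For part (ii) the decay bound is already decisive. Multiplying it by $t^{1/m}$ gives $t^{1/m}\|S_\mu(t)\psi/r^k\|_{L^p_{d\eta}}\leq C\|\psi/r^k\|_{L^q_{d\eta}}$ uniformly in $t\in I$, which is exactly the $\mathcal{C}_m$ bound \eqref{s-t2}. For membership in the homogeneous space $\mathcal{\dot{C}}_m$ I would note that on the dense class $L^q_{d\eta}\cap L^p_{d\eta}$ the $p=p$ contraction yields $t^{1/m}\|S_\mu(t)\psi/r^k\|_{L^p_{d\eta}}\leq t^{1/m}\|\psi/r^k\|_{L^p_{d\eta}}\to0$ as $t\to0^+$, and the uniform decay bound extends the vanishing to all of $L^q_{d\eta}$. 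Since only this pointwise control is needed, the wider range of the generalized admissible triplet (allowing $1<m$) is permissible here. The $C_b(I;L^q_{d\eta})$ assertion, common to (i) and (ii), I would obtain from the contraction (for boundedness) together with the strong continuity $S_\mu(t)\psi\to\psi$ in $L^q_{d\eta}$ as $t\to0^+$, proved first on a dense subset from the Gaussian-type kernel \eqref{kernal} and then extended by the uniform bound.

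The delicate point is the $L^m$-in-time estimate \eqref{s-t1} of part (i): raising the decay bound to the power $m$ and integrating over $(0,T)$ produces $\int_0^T t^{-1}\,dt$, which diverges, so the pointwise bound cannot be inserted directly. The resolution I propose is to read the decay bound as a weak-type statement. Since $t\mapsto t^{-1/m}$ lies in the Lorentz space $L^{m,\infty}(I)$ with quasinorm independent of $T$, the sublinear operator $\mathcal{T}:w\mapsto\big(t\mapsto\|S_\mu(t)(r^kw)/r^k\|_{L^p_{d\eta}}\big)$ maps $L^q_{d\eta}$ into $L^{m,\infty}(I)$. To upgrade weak type to strong type I would keep $p$ fixed and vary the domain exponent: choosing $q_0<q<q_1$ inside the admissible range, with $m_0,m_1$ determined by the admissibility relation (which is affine in $1/q$, so the interpolation lands on the correct $m$), gives two weak-type bounds $L^{q_i}_{d\eta}\to L^{m_i,\infty}(I)$, and the Marcinkiewicz interpolation theorem then delivers the strong-type bound $L^q_{d\eta}\to L^m(I)$ at the intermediate exponent, i.e. \eqref{s-t1}.

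The main obstacle is precisely this interpolation step, and it accounts for the hypothesis $q<m$ recorded in Remark (ii): Marcinkiewicz interpolation for a map between a domain and a target space requires the domain exponent not to exceed the target exponent, i.e. $q\leq m$ along the interpolation segment, and the admissible-triplet range of $p$ is exactly the set on which $q<m$ holds. The generalized admissible triplet relaxes this to $1<m$ precisely because part (ii) avoids interpolation altogether. Two technical points I would verify are that endpoint exponents $q_0,q_1$ with $m_0\neq m_1$ can always be chosen strictly around the given $q$ inside the admissible range, and that all constants—including $C(\beta,\mu,p,q)$ from \eqref{constant} and the interpolation constant—remain independent of $k$, as the statement demands.
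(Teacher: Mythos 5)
Your proposal is correct and follows essentially the paper's own route: the paper likewise converts the decay bound \eqref{young} into a weak-type $(\tilde q,\tilde m)$ estimate for the sublinear map $\psi\mapsto\|\frac{S_{\mu}(t)\psi}{r^k}\|_{L^{p}_{d\eta(r)}}$ and upgrades it to strong type by Marcinkiewicz interpolation with $p$ held fixed, which is exactly where the condition $q<m$ built into admissibility is consumed, while part (ii) is dispatched directly from the pointwise decay. The only cosmetic difference is the endpoint configuration: the paper interpolates one weak endpoint $(\tilde q,\tilde m)$ against the strong $(\tilde p,\infty)$ endpoint supplied by the contraction $\|\frac{S_{\mu}(t)\psi}{r^k}\|_{L^{\tilde p}_{d\eta(r)}}\leq C\|\frac{\psi}{r^k}\|_{L^{\tilde p}_{d\eta(r)}}$, whereas you straddle $q$ with two weak endpoints $(q_0,m_0)$ and $(q_1,m_1)$ — both choices land on the correct $(m,p,q)$ because the admissibility relation is affine in $1/q$ for fixed $p$.
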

\begin{proof}The statement (ii) follows easily from Lemma 3.1. It suffices to prove (i).
For the case $p=q$ and $m=\infty$, the space-time estimate is true
from \eqref{young}. We now consider the case $p>q$. Assume
$(\tilde{m},\tilde{p},\tilde{q})$ be an admissible triplet and
define the operator
$$U\psi=\|\frac{S_{\mu}(t)\psi}{r^k}\|_{L^{\tilde{p}}_{d\eta(r)}}$$
from an weighted $L^q$ space to functions on $[0,T)$. As the Young's
inequality \eqref{young} gives
$$U\psi\leq Ct^{\frac{-n+\beta-2k}{2-\beta}(\frac1{\tilde{q}}-\frac1{\tilde{p}})}\|\frac{\psi}{r^k}\|_{L^{\tilde{q}}_{d\eta(r)}}= C t^{-\frac1{\tilde{m}}}\|\frac{\psi}{r^k}\|_{L^{\tilde{q}}_{d\eta(r)}}.$$
It is easy to see that
\begin{eqnarray*}
m(t:|U\psi|>\tau)&\leq&m\{t:Ct^{-\frac1{\tilde{m}}}\|\frac{\psi}{r^k}\|_{L_{d\eta(r)}^{\tilde{q}}}>\tau\}\\
&=&m\{t:t<(\frac{C\|\frac{\psi}{r^k}\|_{L_{d\eta(r)}^{\tilde{q}}}}{\tau})^{\tilde{m}}\}\\
&\leq&(\frac{C\|\frac{\psi}{r^k}\|_{L_{d\eta(r)}^{\tilde{q}}}}{\tau})^{\tilde{m}},
\end{eqnarray*}
which implies that $U$ is a weak type $(\tilde{q},\tilde{m})$
operator. On the other hand, $U$ is sub-additive and satisfies that:
$$U\psi=\|\frac{S_{\mu}(t)\psi}{r^k}\|_{L^{\tilde{p}}_{d\eta(r)}}\leq C\|\frac{\psi}{r^k}\|_{L_{d\eta(r)}^{\tilde{p}}}$$
for $q\leq \tilde{p}\leq \infty$, which means that $U$ is a
$(\tilde{p},\infty)$ type operator. For any given admissible triplet
$(m,p,q)$, we choose proper $(\tilde{m},\tilde{p},\tilde{q})$ and
$\theta$ such that
$$\frac1{q}=\frac{\theta}{\tilde{q}}+\frac{1-\theta}{\tilde{p}},$$
$$\frac1{m}=\frac{\theta}{\tilde{m}}+\frac{1-\theta}{\infty},$$
$$p=\tilde{p}.$$
Then, the operator $U$ is of type $(q,m)$ by the Marcinkiewicz
interpolation theorem, i.e.,
$$\|U\psi\|_{L^{m}}\leq C \|\frac{\psi}{r^k}\|_{L_{d\eta(r)}^{q}},$$
which is just the desired result \eqref{s-t1}.
\end{proof}

\begin{rem}Let $m=\infty$ and $p=q=2$, from \eqref{s-t1} we derive
$$\forall \ k\geq0 \ \ \ \ \sup_{t>0}\int_0^\infty|S_{\mu}(t)\psi_k|^2r^{n-1-\beta}dr\leq C\int_0^\infty|\psi_k|^2r^{n-1-\beta}dr.$$
Summing over with $k$ and $l$, we obtain:
$$\sup_{t>0}\int\sum_{k,l}|S_{\mu}(t)\psi_k|^2r^{n-1-\beta}dr\leq C\int_0^\infty\sum_{k,l}|\psi_k|^2r^{n-1-\beta}dr.$$
That is
$$\|v(t,r,\theta)\|_{L^\infty_tL^2({r^{n-1-\beta}dr})L^2_{\theta}}\leq C\|u_0\|_{L^2({r^{n-1-\beta}dr})L^2_{\theta}}.$$
\end{rem}
Now, we move to the nonhomogeneous part of solution. From here and
following, we denote $$\mathbb{G}(f)(t,r):=
\int_0^tS_{\mu}(t-\tau)f(\tau,r)d\tau)$$ and
$$\gamma=\frac{n-\beta+2k}{2-\beta}.$$ As matter of fact, we have the following space-time
estimates in $\mathcal{L}$ space framework.
\begin{lem}
For $b>0$ and $T>0$, let 
$q_0=b\gamma$, $I=[0,T)$. Assume $q\geq q_0>1$ and $(m,p,q)$ is an
admissible triplet
satisfying $p>b+1$.\\
(i) If $\frac{f}{r^k}\in L^{\frac m{b+1}}(I;L_{d\eta(r)}^{\frac
p{b+1}})$, then,
$$\|\frac{\mathbb{G}f}{r^k}\|_{L^\infty(I;L_{d\eta(r)}^q)}\leq CT^{1-\frac{b\gamma}q}\|\frac{f}{r^k}\|_{L^{\frac m{b+1}}(I;L_{d\eta(r)}^{\frac p{b+1}})}$$
for $p\leq q(1+b)$ and
$$\|\frac{\mathbb{G}f}{r^k}\|_{L^\infty(I;L_{d\eta(r)}^q)}\leq CT^{1-\frac{b\gamma}q}\||\frac{f}{r^k}|^{\frac1{b+1}}\|^{\theta(b+1)}_{L^\infty(I;L^q_{d\eta(r)})}\||\frac{f}{r^k}|^{\frac1{b+1}}\|_{L^{m}(I;L_{d\eta(r)}^{p})}^{(1-\theta)(b+1)}$$
for $p> q(1+b)$, where $\theta=\frac{p-q(b+1)}{(b+1)(p-q)}$.\\
(ii) If $\frac{f}{r^k}\in L^{\frac m{b+1}}(I;L_{d\eta(r)}^{\frac
p{b+1}})$, then,
$$\|\frac{\mathbb{G}f}{r^k}\|_{L^m(I;L_{d\eta(r)}^p)}\leq CT^{1-\frac{b\gamma}q}\|\frac{f}{r^k}\|_{L^{\frac m{b+1}}(I;L_{d\eta(r)}^{\frac p{b+1}})}$$
for $p\leq q(1+b)$ and
$$\|\frac{\mathbb{G}f}{r^k}\|_{L^m(I;L_{d\eta(r)}^p)}\leq CT^{1-\frac{b\gamma}q}\||\frac{f}{r^k}|^{\frac1{b+1}}\|^{\theta(b+1)}_{L^\infty(I;L^q_{d\eta(r)})}\||\frac{f}{r^k}|^{\frac1{b+1}}\|_{L^{m}(I;L_{d\eta(r)}^{p})}^{(1-\theta)(b+1)}$$
for $p> q(1+b)$, where $\theta$ is the same as in (i).
\end{lem}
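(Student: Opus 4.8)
The plan is to derive both (i) and (ii) from a single pointwise-in-time bound obtained by combining Duhamel's formula with the linear smoothing estimate \eqref{young}. Writing $\mathbb{G}f(t,r)=\int_0^tS_\mu(t-\tau)f(\tau,r)\,d\tau$ and applying Minkowski's integral inequality to move the spatial norm inside, I would estimate, for any exponents $1\le s\le s'\le\infty$,
$$\left\|\frac{\mathbb{G}f(t)}{r^k}\right\|_{L^{s'}_{d\eta}}\le\int_0^t\left\|\frac{S_\mu(t-\tau)f(\tau)}{r^k}\right\|_{L^{s'}_{d\eta}}d\tau\le C\int_0^t(t-\tau)^{\gamma\left(\frac1{s'}-\frac1s\right)}\left\|\frac{f(\tau)}{r^k}\right\|_{L^{s}_{d\eta}}d\tau,$$
where the second inequality is exactly Lemma 3.1. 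Taking $s'=q$ and afterwards the supremum in $t$ gives part (i), while taking $s'=p$ and afterwards the $L^m_t$ norm gives part (ii); thus the whole lemma reduces to a one–dimensional convolution inequality in time. Throughout I would abbreviate $w:=|f/r^k|^{1/(b+1)}$, so that $\|f(\tau)/r^k\|_{L^{s}_{d\eta}}=\|w(\tau)\|^{b+1}_{L^{s(b+1)}_{d\eta}}$; this identity is the bookkeeping that turns the unusual source norms of the statement into genuine powers of ordinary $L^m_tL^p_{d\eta}$ and $L^\infty_tL^q_{d\eta}$ norms of $w$.

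In the regime $p\le q(1+b)$ I would choose the source exponent $s=\frac{p}{b+1}$. The hypothesis $p>b+1$ makes $s>1$, and $p\le q(1+b)$ makes $s\le q$, so Lemma 3.1 is legitimately applicable with either $s'=q$ or $s'=p$. Then $\|f(\tau)/r^k\|_{L^{p/(b+1)}_{d\eta}}=\|w(\tau)\|^{b+1}_{L^{p}_{d\eta}}$ belongs to $L^{m/(b+1)}(I)$, and the surviving time integral is the convolution of the kernel $\tau\mapsto\tau^{\gamma(1/s'-(b+1)/p)}$ with $\|w(\cdot)\|^{b+1}_{L^p_{d\eta}}$. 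Estimating this by Young's convolution inequality in time and using the admissibility identity $\frac1m=\gamma\left(\frac1q-\frac1p\right)$ together with $q_0=b\gamma$, a direct computation collapses the resulting power of $T$ to $1-\frac{b\gamma}{q}$ for both $s'=q$ and $s'=p$. The remaining factor is $\|w\|^{b+1}_{L^m(I;L^p_{d\eta})}=\|f/r^k\|_{L^{m/(b+1)}(I;L^{p/(b+1)}_{d\eta})}$, which is precisely the right-hand side claimed for this case.

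In the regime $p>q(1+b)$ the choice $s=\frac{p}{b+1}$ would exceed $q$ and is no longer admissible for the target $L^q_{d\eta}$, so I would instead take $s=q$ and make up the lost integrability by interpolating in space. Since $q<q(b+1)<p$, Hölder's inequality gives $\|w(\tau)\|_{L^{q(b+1)}_{d\eta}}\le\|w(\tau)\|^{\theta}_{L^q_{d\eta}}\|w(\tau)\|^{1-\theta}_{L^p_{d\eta}}$ with $\frac1{q(b+1)}=\frac{\theta}{q}+\frac{1-\theta}{p}$, and solving this relation yields exactly $\theta=\frac{p-q(b+1)}{(b+1)(p-q)}$. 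Raising to the power $b+1$ rewrites $\|f(\tau)/r^k\|_{L^q_{d\eta}}$ as $\|w(\tau)\|^{\theta(b+1)}_{L^q_{d\eta}}\|w(\tau)\|^{(1-\theta)(b+1)}_{L^p_{d\eta}}$; I would bound the first factor by its supremum $\|w\|^{\theta(b+1)}_{L^\infty(I;L^q_{d\eta})}$, pull it out of the integral, and apply Young's convolution inequality to the kernel $(t-\tau)^{\gamma(1/s'-1/q)}$ (equal to $1$ for $s'=q$, and to $(t-\tau)^{-1/m}$ for $s'=p$) against $\|w(\cdot)\|^{(1-\theta)(b+1)}_{L^p_{d\eta}}$. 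Here the auxiliary time exponent comes out to $q/q_0\ge1$, and the same admissibility bookkeeping again returns the factor $T^{1-b\gamma/q}$ together with the stated product of $L^\infty_tL^q_{d\eta}$ and $L^m_tL^p_{d\eta}$ norms.

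The main obstacle is the critical endpoint $q=q_0=b\gamma$, which cannot be avoided because it is exactly the global–small–solution regime of Theorem 1.1(ii). There the target power $T^{1-b\gamma/q}$ degenerates to $T^0$, and correspondingly the relevant power of the time-convolution kernel becomes non-integrable on the finite interval $[0,T)$, so Young's inequality breaks down. The resolution for the $L^m_tL^p_{d\eta}$ estimate is to replace Young's inequality by the scale-invariant Hardy–Littlewood–Sobolev inequality in time, whose exponents match precisely when $\frac{b\gamma}{q}=1$ and which produces the required $T$-independent bound. The subtlest point is the $L^\infty_tL^{q_0}_{d\eta}$ estimate at criticality, since neither Young nor Hardy–Littlewood–Sobolev maps into $L^\infty_t$ at the margin; there I would obtain continuity into $L^{q_0}_{d\eta}$ by splitting the Duhamel integral near $\tau=t$ and passing to the limit, using the already-established finite-exponent bounds and absolute continuity of the integral. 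For every $q>q_0$ none of these difficulties appears and Young's convolution inequality on $[0,T)$ suffices; the only routine care needed at each step is to confirm the ordering $s\le s'$ required by Lemma 3.1 and that the auxiliary time exponents $m/(b+1)$ and $q/q_0$ are at least $1$, all of which follow from $q\ge q_0>1$, $p>b+1$ and admissibility.
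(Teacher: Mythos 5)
Your proposal follows essentially the same route as the paper's proof: Minkowski's inequality plus the pointwise smoothing estimate of Lemma 3.1, then H\"older/Young convolution in time with source exponent $\frac{p}{b+1}$ when $p\le q(b+1)$, and, when $p>q(b+1)$, spatial H\"older interpolation through $L^{q(b+1)}_{d\eta}$ with exactly the paper's $\theta=\frac{p-q(b+1)}{(b+1)(p-q)}$, followed by pulling out the $L^\infty_t L^q_{d\eta}$ factor. Your exponent bookkeeping checks out and coincides with the paper's: $\frac1\chi=1-\frac{b+1}{m}$ in part (i), $1+\frac1m=\frac1\chi+\frac{b+1}{m}$ in part (ii), $(b+1)(1-\theta)=\frac{bp}{p-q}$ so that $\frac{(b+1)(1-\theta)}{m}=\frac{b\gamma}{q}$, and the auxiliary time exponent $\frac{m}{(1-\theta)(b+1)}=\frac{q}{q_0}$.

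The one genuine difference is your handling of the critical endpoint $q=q_0=b\gamma$, which the lemma's hypothesis $q\ge q_0$ does include. There all the relevant kernel powers become exactly borderline (e.g.\ $\frac{b\gamma}{p}\chi=1$), so the paper's displayed computation $\left(\int_0^T\tau^{-\frac{b\gamma}{p}\chi}\,d\tau\right)^{\frac1\chi}\le CT^{1-\frac{b\gamma}{q}}$ tacitly requires $q>q_0$ and diverges at $q=q_0$; the paper never addresses this, even though the endpoint case is precisely what is invoked for the global small solution in Section 4. Your substitution of the Hardy--Littlewood--Sobolev inequality for Young's inequality in the $L^m_tL^p_{d\eta}$ estimate is the standard and correct repair there (the HLS scaling relation $\frac{b\gamma}{p}=1-\frac{b}{m}$ holds exactly when $q=q_0$), and your splitting/continuity argument for the $L^\infty_tL^{q_0}_{d\eta}$ bound, while only sketched, is the accepted route (as in Giga and Miao et al.). So your argument is correct, matches the paper away from the endpoint, and is more complete than the paper's own proof at $q=q_0$.
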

\begin{proof}
First we prove (i). Consider the case when $p\leq q(b+1)$. Using
Lemma 3.1 and H\"{o}lder's inequality on $t$, one has
\begin{eqnarray*}\|\frac{\mathbb{G}f}{r^k}\|_{L^\infty(I;L_{d\eta(r)}^q)}&\leq& C\int_0^t(t-\tau)^{-\gamma(\frac{b+1}p-\frac1q)}\|\frac{f(\tau,r)}{r^k}\|_{L_{d\eta(r)}^{\frac p{b+1}}}d\tau\\
&\leq& C \left(\int_0^t(t-\tau)^{-\gamma(\frac{b+1}p-\frac1q)\chi} d\tau\right)^{\frac1\chi}\|\frac{f(\tau,r)}{r^k}\|_{L^{\frac m{b+1}}(I;L_{d\eta(r)}^{\frac p{b+1}})}\\
&\leq& CT^{1-\frac{b\gamma}{q}}\|\frac{f}{r^k}\|_{L^{\frac
m{b+1}}(I;L_{d\eta(r)}^{\frac p{b+1}})},
\end{eqnarray*}
where $\frac1\chi=1-\frac{b+1}{m}$ and $C=C(\mu,p,q,b)$. For the
case $p> q(b+1)$, by means of the Riesz interpolation theorem and
H\"{o}lder's inequality, we have:
\begin{eqnarray*}
\|\frac{\mathbb{G}f}{r^k}\|_{L^\infty(I;L_{d\eta(r)}^q)}&\leq& \int_0^t\left\||\frac{f}{r^k}|^{\frac1{b+1}}\right\|_{L^{q(b+1)}_{d\eta(r)}}^{b+1}d\tau\\
&\leq& C\int_0^t\left\Vert|\frac{f(\tau,r)}{r^k}|^{\frac1{b+1}}\right\Vert^{(b+1)\theta}_{L^q_{d\eta(r)}}\left\||\frac{f(\tau,r)}{r^k}|^{\frac1{b+1}}\right\|_{L^p_{d\eta(r)}}^{(b+1)(1-\theta)}d\tau\\
&\leq&CT^{1-\frac{(b+1)((1-\theta)}{m}}\left\||\frac{f(\tau,r)}{r^k}|^{\frac1{b+1}}\right\|^{(b+1)\theta}_{C(I;L^q_{d\eta(r)})}
\left\||\frac{f(\tau,r)}{r^k}|^{\frac1{b+1}}\right\|_{L^m(I;L^p_{d\eta(r)})}^{(b+1)(1-\theta)}\\
&=&
CT^{1-\frac{b\gamma}{q}}\left\||\frac{f(\tau,r)}{r^k}|^{\frac1{b+1}}\right\|^{(b+1)\theta}_{C(I;L^q_{d\eta(r)})}
\left\||\frac{f(\tau,r)}{r^k}|^{\frac1{b+1}}\right\|_{L^m(I;L^p_{d\eta(r)})}^{(b+1)(1-\theta)},
\end{eqnarray*}
where $\theta$ satisfies
$$\frac1{q(b+1)}=\frac{\theta}q+\frac{1-\theta}p,$$ and
$$1=\frac{(1+b)(1-\theta)}{m}+\frac1\chi.$$ We now prove (ii). For
the case $p\leq q(b+1)$, by Lemma 3.1 and Young's inequality on $t$,
one has
\begin{eqnarray*}\|\frac{\mathbb{G}f}{r^k}\|_{L^m(I;L_{d\eta(r)}^p)}&\leq& C\left\| \int_0^t(t-\tau)^{-\gamma(\frac{b+1}p-\frac1p)}\|\frac{f(\tau,r)}{r^k}\|_{L_{d\eta(r)}^{\frac p{b+1}}}d\tau\right\|_{L^m}\\
&\leq& C \left(\int_0^T\tau^{-\frac{b\gamma}p\chi} d\tau\right)^{\frac1\chi}\|\frac{f(\tau,r)}{r^k}\|_{L^{\frac m{b+1}}(I;L_{d\eta(r)}^{\frac p{b+1}})}\\
&\leq& CT^{1-\frac{b\gamma}{q}}\|\frac{f}{r^k}\|_{L^{\frac
m{b+1}}(I;L_{d\eta(r)}^{\frac p{b+1}})},
\end{eqnarray*}
where $1+\frac1m=\frac1\chi+\frac{b+1}{m}$. For the case $p>q(b+1)$,
by a similar manner as the proof of (i), one has
\begin{eqnarray*}
\|\frac{\mathbb{G}f}{r^k}\|_{L^m(I;L_{d\eta(r)}^p)}&\leq& C\left\|\int_0^t(t-\tau)^{-\gamma(\frac1q-\frac1p)}\||\frac{f}{r^k}|^{\frac1{b+1}}\|_{L^{q(b+1)}_{d\eta(r)}}^{b+1}d\tau\right\|_{L^m}\\
&\leq& C\left\|\int_0^t(t-\tau)^{-\gamma(\frac1q-\frac1p)}\left\Vert|\frac{f(\tau,r)}{r^k}|^{\frac1{b+1}}\right\Vert^{(b+1)\theta}_{L^q_{d\eta(r)}}\left\||\frac{f(\tau,r)}{r^k}|^{\frac1{b+1}}\right\|_{L^p_{d\eta(r)}}^{(b+1)(1-\theta)}d\tau\right\|_{L^m}\\
&\leq&C\left(\int_0^T\tau^{-\gamma(\frac1q-\frac1p)\chi}d\tau\right)
^{\frac1\chi}\left\||\frac{f(\tau,r)}{r^k}|^{\frac1{b+1}}\right\|^{(b+1)\theta}_{C(I;L^q_{d\eta(r)})}
\left\||\frac{f(\tau,r)}{r^k}|^{\frac1{b+1}}\right\|_{L^m(I;L^p_{d\eta(r)})}^{(b+1)(1-\theta)}\\
&\leq&
CT^{1-\frac{b\gamma}{q}}\left\||\frac{f(\tau,r)}{r^k}|^{\frac1{b+1}}\right\|^{(b+1)\theta}_{C(I;L^q_{d\eta(r)})}
\left\||\frac{f(\tau,r)}{r^k}|^{\frac1{b+1}}\right\|_{L^m(I;L^p_{d\eta(r)})}^{(b+1)(1-\theta)},
\end{eqnarray*}
where $\theta$ and $\chi$ satisfy
$$\frac1{q(b+1)}=\frac\theta{q}+\frac{1-\theta}p,\ \ \ \ \ 1+\frac1m=\frac{(b+1)(1-\theta)}{m}+\frac1\chi$$
with $q<q(1+b)<p$.
\end{proof}
In fact, concerning the nonhomogeneous part, Lemma 3.4 has its
counterpart in the the $\mathcal{C}$ space framework. The estimates
can be proved by following \cite{Miao08}. We state the result here
and leave the proof to readers.
\begin{lem} For $b>0$ and $T>0$, let $\gamma=\frac{n-\beta+2k}{2-\beta}$, $q_0=b\gamma$, $I=[0,T)$. Assume $q\geq q_0>1$ and $(m,p,q)$ is an admissible triplet
satisfying $p>b+1$.\\
(i) If $\frac{f}{r^k}\in \mathcal{C}_{\frac
m{b+1}}(I;L_{d\eta(r)}^{\frac p{b+1}})$, then,
$$\|\frac{\mathbb{G}f}{r^k}\|_{L^\infty(I;L_{d\eta(r)}^q)}\leq CT^{1-\frac{b\gamma}q}\|\frac{f}{r^k}\|_{\mathcal{C}_{\frac m{b+1}}(I;L_{d\eta(r)}^{\frac p{b+1}})}$$
for $p\leq q(1+b)$ and
$$\|\frac{\mathbb{G}f}{r^k}\|_{L^\infty(I;L_{d\eta(r)}^q)}\leq CT^{1-\frac{b\gamma}q}\||\frac{f}{r^k}|^{\frac1{b+1}}\|^{\theta(b+1)}_{L^\infty(I;L^q_{d\eta(r)})}\||\frac{f}{r^k}|^{\frac1{b+1}}\|_{\mathcal{C}_{m}(I;L_{d\eta(r)}^{p})}^{(1-\theta)(b+1)}$$
for $p> q(1+b)$, where $\theta=\frac{p-q(b+1)}{(b+1)(p-q)}$.\\
(ii) If $\frac{f}{r^k}\in \mathcal{C}_{\frac
m{b+1}}(I;L_{d\eta(r)}^{\frac p{b+1}})$, then,
$$\|\frac{\mathbb{G}f}{r^k}\|_{\mathcal{C}_m(I;L_{d\eta(r)}^p)}\leq CT^{1-\frac{b\gamma}q}\|\frac{f}{r^k}\|_{\mathcal{C}_{\frac m{b+1}}(I;L_{d\eta(r)}^{\frac p{b+1}})}$$
for $p\leq q(1+b)$ and
$$\|\frac{\mathbb{G}f}{r^k}\|_{\mathcal{C}_m(I;L_{d\eta(r)}^p)}\leq CT^{1-\frac{b\gamma}q}\||\frac{f}{r^k}|^{\frac1{b+1}}\|^{\theta(b+1)}_{L^\infty(I;L^q_{d\eta(r)})}\||\frac{f}{r^k}|^{\frac1{b+1}}\|_{\mathcal{C}_{m}(I;L_{d\eta(r)}^{p})}^{(1-\theta)(b+1)}$$
for $p> q(1+b)$, where $\theta$ is the same as in (i).
\end{lem}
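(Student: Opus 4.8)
The plan is to follow the proof of Lemma 3.4 step for step, keeping its algebraic skeleton intact and only replacing the two places where time integrability was exploited---the H\"older inequality in $t$ for part (i) and Young's convolution inequality in $t$ for part (ii)---by the elementary device of reading the time weight off the $\mathcal{C}_\sigma$-norm and then evaluating a single Beta-type integral in $\tau$. The sole analytic input is again Lemma 3.1, in the form that for $1\le s\le P$ one has $\|S_\mu(t-\tau)g/r^k\|_{L^P_{d\eta}}\le C(t-\tau)^{-\gamma(\frac1s-\frac1P)}\|g/r^k\|_{L^s_{d\eta}}$, together with the admissibility relation $\frac1m=\gamma(\frac1q-\frac1p)$.

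For the cases $p\le q(b+1)$ I would first apply Minkowski's integral inequality to move the spatial norm inside $\mathbb{G}f=\int_0^tS_\mu(t-\tau)f\,d\tau$ and use Lemma 3.1, obtaining pointwise in $t$
\beq
\Big\|\frac{\mathbb{G}f(t)}{r^k}\Big\|_{L^q_{d\eta}}\le C\int_0^t(t-\tau)^{-a}\Big\|\frac{f(\tau)}{r^k}\Big\|_{L^{p/(b+1)}_{d\eta}}\,d\tau,
\eneq
with $a=\gamma(\frac{b+1}p-\frac1q)$ for part (i), and the analogous bound with target norm $\|\cdot\|_{L^p_{d\eta}}$ and exponent $a=\gamma\frac bp$ for part (ii). Since membership of $f/r^k$ in $\mathcal{C}_{m/(b+1)}(I;L^{p/(b+1)}_{d\eta})$ means exactly $\|f(\tau)/r^k\|_{L^{p/(b+1)}_{d\eta}}\le\tau^{-(b+1)/m}\|f/r^k\|_{\mathcal{C}_{m/(b+1)}}$, the $\tau$-integral collapses to $\int_0^t(t-\tau)^{-a}\tau^{-(b+1)/m}\,d\tau=B(1-a,1-\tfrac{b+1}m)\,t^{1-a-(b+1)/m}$. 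A short computation using $\frac1m=\gamma(\frac1q-\frac1p)$ shows the net power equals $t^{1-b\gamma/q}$ in part (i), while in part (ii) the extra factor $t^{1/m}$ supplied by the target $\mathcal{C}_m$-weight again combines to give $t^{1-b\gamma/q}$; bounding $t\le T$ and using $q\ge q_0=b\gamma$ (so the exponent is nonnegative) produces the claimed $T^{1-b\gamma/q}$.

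For the cases $p>q(b+1)$ I would retain verbatim the interpolation step of Lemma 3.4: write the nonlinearity as $\big\||f/r^k|^{1/(b+1)}\big\|_{L^{q(b+1)}_{d\eta}}^{b+1}$ and interpolate $L^{q(b+1)}$ between $L^q$ and $L^p$ with $\theta=\frac{p-q(b+1)}{(b+1)(p-q)}$, so that the $L^q$ factor is absorbed by the unweighted $\|\cdot\|_{L^\infty(I;L^q_{d\eta})}$ norm and the $L^p$ factor by $\tau^{-1/m}\|\cdot\|_{\mathcal{C}_m(I;L^p_{d\eta})}$. What remains is the pure power integral $\int_0^t\tau^{-(1-\theta)(b+1)/m}\,d\tau$ in part (i) and the Beta integral $\int_0^t(t-\tau)^{-\gamma(\frac1q-\frac1p)}\tau^{-(1-\theta)(b+1)/m}\,d\tau$ in part (ii); invoking the identity $\frac{(b+1)(1-\theta)}m=\frac{b\gamma}q$ already recorded in the proof of Lemma 3.4, both once more yield $t^{1-b\gamma/q}\le T^{1-b\gamma/q}$.

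The one point demanding care---and the main, if routine, obstacle---is the convergence of these $\tau$-integrals: one must check that each exponent on $(t-\tau)$ is strictly below $1$ (local integrability at $\tau=t$) and that each weight exponent at $\tau=0$, namely $\frac{b+1}m$ or $\frac{(1-\theta)(b+1)}m$, is strictly below $1$. These are precisely the inequalities that legitimised the H\"older and Young steps in Lemma 3.4, and they hold under the standing hypotheses (admissibility, $p>b+1$, $q\ge q_0>1$). The bookkeeping confirming that the net power of $t$ is exactly $1-b\gamma/q$ is what simultaneously guarantees nonnegativity of the exponent and the scale-invariant borderline behaviour at $q=q_0$ needed for the global small-data statement; continuity in $t$ and finiteness of the weighted sup then place $\mathbb{G}f/r^k$ in the asserted space. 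Since nothing beyond Lemma 3.1 enters, the argument parallels \cite{Miao08} line by line.
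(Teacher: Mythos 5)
Your proposal is correct and is essentially the argument the paper intends: the authors omit the proof of this lemma, stating only that it follows the pattern of Lemma 3.4 via \cite{Miao08}, and that pattern is exactly what you execute --- reuse Lemma 3.1, replace the temporal H\"older/Young steps by reading off the $\tau^{-(b+1)/m}$ (resp.\ $\tau^{-(1-\theta)(b+1)/m}$) weight from the $\mathcal{C}_\sigma$-norm and evaluating the resulting Beta-type integral, with the same interpolation and the identity $\frac{(b+1)(1-\theta)}{m}=\frac{b\gamma}{q}$ giving the net power $T^{1-\frac{b\gamma}{q}}$. The strict-inequality convergence conditions you flag are the same ones implicitly used in the paper's own proof of Lemma 3.4, so nothing beyond its framework is needed.
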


\section{Proof of Theorem 1.4}
Following the similar procedure as the linear $k$-th model, we also
obtain the corresponding integral equation of \eqref{radial}:
\begin{eqnarray}\label{nonlinear-mild}
u(t,r)&=&\mathcal{T}(u):=S_{\mu}(t)u_0(r)+\int_0^tS_{\mu}(t-\tau)F(u(\tau,r))d\tau\\
&=&S_{\mu}u_0(t,r)+\mathbb{G}(F(u(\tau,r))).\nonumber
\end{eqnarray}
We call the solution of the integral form \eqref{nonlinear-mild} the
mild solution.
\begin{eqnarray*}
\|u(t,r)\|_{X(I)}&\leq &\|S_{\mu}u_0(t,r)\|_{X(I)}+\|\mathbb{G}(F(u(\tau,r))\|_{X(I)}\\
&=&I+II.
\end{eqnarray*}
Indeed, we have
\begin{equation}\label{I}
I\leq
\|S_{\mu}(t)u_0(r)\|_{L^m(I;L_{d\eta}^p(\mathbb{R}^+))}+\|S_{\mu}(t)u_0(r)\|_{L^\infty(I;L_{d\eta}^q(\mathbb{R}^+))}\leq
C_1 \|u_0\|_{L^q_{d\eta}}
\end{equation}
and
\begin{eqnarray}\label{II}
II&=&\|\int_0^tS_{\mu}(t-\tau)|u|^bud\tau\|_{L^m(I;L_{d\eta}^p(\mathbb{R}^+))}+\|\int_0^tS_{\mu}(t-\tau)|u|^bud\tau\|_{L^\infty(I;L_{d\eta}^q(\mathbb{R}^+))}\nonumber\\
&\leq&\left\{\begin{array}{ll}CT^{1-\frac{b\gamma}q}\||u|^bu\|_{L^{\frac{m}{b+1}}(I;L^{\frac{p}{b+1}}_{d\eta})}, \ \ \ \mbox{for}\ \ b+1<p\leq q(b+1),\\
 CT^{1-\frac{b\gamma}q}\|u\|^{\theta(b+1)}_{L^\infty(I;L^q_{d\eta})}\|u\|^{(1-\theta)(b+1)}_{L^m(I;L^p_{d\eta})},\ \ \ \ \ \ \mbox{for}\ \ p>q(b+1),\ \ \ \end{array}\right.\nonumber\\
 &\leq&C_2T^{1-\frac{b\gamma}q}\|u\|_{X(I)}^{b+1}.
\end{eqnarray}
Combining the estimates for $I$ and $II$, we derive
\begin{equation}\label{Tu}
\|\mathcal{T}u\|_{X(I)}\leq C_1 \|u_0\|_{L^q_{d\eta}}+
C_2T^{1-\frac{b\gamma}q}\|u\|_{X(I)}^{b+1}.
\end{equation}
Moreover,
\begin{eqnarray*}
&&\|T(u-v)\|_{X(I)}=\|\int_0^t S_\mu(t-\tau)(|u|^bu-|v|^bv)d\tau\|_{X(I)}\\
&&\leq\|\int_0^t S_\mu(t-\tau)\left((|u|^b+|v|^b)|u-v|\right)d\tau\|_{X(I)}\\
&&\leq\|\int_0^t S_\mu(t-\tau)(|u|^b|u-v|)d\tau\|_{X(I)}+\|\int_0^t
S_\mu(t-\tau)(|v|^b|u-v|)d\tau\|_{X(I)}.
\end{eqnarray*}
At the same time, we obtain
\begin{eqnarray*}
&&\|\int_0^t S_\mu(t-\tau)(|u|^b|u-v|)d\tau\|_{X(I)}\\
&&\leq\left\{\begin{array}{ll}CT^{1-\frac{b\gamma}q}\||u|^b(u-v)\|_{L^{\frac{m}{b+1}}(I;L^{\frac{p}{b+1}}_{d\eta})}, \ \ \ \mbox{for}\ \ b+1<p\leq q(b+1),\\
 CT^{1-\frac{b\gamma}q}\||u|^\frac{b}{b+1}(u-v)^{\frac1{b+1}}\|^{\theta(b+1)}_{L^\infty(I;L^q_{d\eta})}\||u|^\frac{b}{b+1}(u-v)^{\frac1{b+1}}\|^{(1-\theta)(b+1)}_{L^m(I;L^p_{d\eta})},\ \ \ \ \ \ \mbox{for}\ \ p>q(b+1),\ \ \
\end{array}\right.\\
&&\leq\left\{\begin{array}{ll}CT^{1-\frac{b\gamma}q}\|u^b\|_{L^{\frac{m}{b}}(I;L^{\frac{p}{b}}_{d\eta})}\|(u-v)\|_{L^{m}(I;L^{p}_{d\eta})}, \ \ \ \mbox{for}\ \ b+1<p\leq q(b+1),\\
 CT^{1-\frac{b\gamma}q}\|u\|_{L^\infty(I;L^{q}_{d\eta})}^{\theta b}\|u-v\|^{\theta}_{L^\infty(I;L^{q}_{d\eta})}
 \|u\|_{L^m(I;L^{p}_{d\eta})}^{(1-\theta)b}
 \|u-v\|^{(1-\theta)}_{L^m(I;L^{p}_{d\eta})},\ \ \ \ \ \  \mbox{for}\ \ p>q(b+1),\ \ \
\end{array}\right.\\
 &&\leq C_2T^{1-\frac{b\gamma}q}\|u\|_{X(I)}^{b}\|u-v\|_{X(I)},
\end{eqnarray*}
and
$$\|\int_0^t S_\mu(t-\tau)(|v|^b|u-v|)d\tau\|_{X(I)}\leq C_2T^{1-\frac{b\gamma}q}\|v\|_{X(I)}^{b}\|u-v\|_{X(I)}.$$
Thus, we have
\begin{equation}\label{T(u-v)}
\|\mathcal{T}(u-v)\|_{X(I)}\leq
C_2T^{1-\frac{b\gamma}q}(\|u\|_{X(I)}^{b}+\|v\|_{X(I)}^{b})\|u-v\|_{X(I)}.
\end{equation}
Now we define the metric space as follows,
$$X_p^{Sol}(I)=\left\{u\in X(I)\bigg|\|u\|_{X(I)}\leq 2C_1\|u_0\|_{L^q_{d\eta}},\ (2C_1)^bC_2T^{1-\frac{b\gamma}q}\|u_0\|_{L^q_{d\eta}}^{b}\leq\frac1{2}\right\}.$$
The estimates of \eqref{Tu} and \eqref{T(u-v)} imply that
$\mathcal{T}u$ is a contraction mapping from $X_p^{Sol}$ to itself.
We obtained the results (i) and (ii) by applying the Banach
contraction mapping principle. Concerning (iii), by standard
argument, one is able to show
$$\lim_{t\rightarrow T^*}\|u(t)\|_{L^q_{d\eta}}=\infty.$$
Meanwhile, for arbitrary $t<s<T^*$ with
$\|u(t)\|_{L^q_{d\eta}}<\infty$, by following a similar procedure as
above, one can find the unique solution in
$$X_p^{Sol}([t,s])=\left\{u\in X([t,s])\bigg|\|u\|_{X([t,s])}\leq 2C_1\|u(t)\|_{L^q_{d\eta}},\ (2C_1)^bC_2|s-t|^{1-\frac{b\gamma}q}\|u(t)\|_{L^q_{d\eta}}^{b}\leq\frac1{2}\right\}.$$
Thus, there exits $\varepsilon_0>0$ such that
$$\varepsilon_0\leq(2C_1)^bC_2|s-t|^{1-\frac{b\gamma}q}\|u(t)\|_{L^q_{d\eta}}^{b}\leq\frac1{2},$$
which gives
$$\|u(t)\|_{L^q_{d\eta}}\geq\frac{C\varepsilon_0}{(T^*-t)^{\frac1b-\frac{\gamma}{q}}}.$$

\newpage






\end{document}